\theoremstyle{plain}
\newtheorem{thm}{Theorem}[section]
\newtheorem{prop}[thm]{Proposition}
\theoremstyle{definition}
\theoremstyle{remark}
\begin{document}

\title{Equilibria of biological aggregations with nonlocal repulsive-attractive interactions}
\author{R. C. Fetecau $^{\ast}$
\and Y. Huang \thanks{Department of Mathematics, Simon Fraser University, 8888
University Dr., Burnaby, BC V5A 1S6, Canada, email: $\{$van,yha82$\}$@sfu.ca}
 }
\maketitle

\begin{center}
\textbf{Abstract}
\end{center}

\begin{quote}
{\small We consider the aggregation equation $\rho_{t}-\nabla\cdot\left(
\rho\nabla K\ast\rho\right) =0$ in $\mathbb{R}^{n}$, where the interaction
potential $K$ incorporates short-range Newtonian repulsion and long-range power-law attraction. 
We study the global well-posedness of solutions and investigate analytically and numerically 
the equilibrium solutions. We show that there exist unique equilibria supported on a ball of $\mathbb{R}^n$.  
By using the  method of moving planes we prove that such equilibria are radially symmetric and monotone in 
the radial coordinate. We perform asymptotic studies  for the limiting cases when the exponent of the 
power-law attraction approaches infinity and a Newtonian singularity, respectively. Numerical simulations 
suggest that equilibria studied here are global attractors for the dynamics of the aggregation model.}
\end{quote}

\smallskip

\textbf{Keywords}: swarm equilibria, biological aggregations, Newtonian potential, global attractors

\textbf{AMS Subject Classification}: 92D25, 35L65, 35Q92, 35B35

\section{Introduction}
\label{sect:intro}

The multidimensional integro-differential equation,
\begin{equation}
\label{eq:aggre}
 \rho_t - \nabla\cdot(\rho\nabla K*\rho) =0,
\end{equation}
has attracted a great amount of interest in recent years. The equation appears in various
contexts related to mathematical models for biological aggregations, where  $\rho$ represents the density 
of the aggregation and $K$ is the social interaction potential. The asterisk $\ast$ denotes convolution. 
We refer to~\cite{M&K, TBL} for an extensive background and literature review on mathematical models of 
social aggregations and in particular, for a thorough discussion on the relevance of equation~\eqref{eq:aggre} 
for modelling swarming behaviours. The equation also arises in a number of
other applications such as granular media~\cite{Toscani2000, CaMcVi2006}, self-assembly of nanoparticles~\cite{HoPu2005,
HoPu2006}, Ginzburg-Landau vortices~\cite{E1994, DuZhang03, MaZh2005} and molecular dynamics simulations 
of matter~\cite{Haile1992}.  In this work however we are primarily interested in biological applications, where equation
~\eqref{eq:aggre} is used to model social aggregations such as insect swarms, fish
schools, bacterial colonies, etc~\cite{M&K}. 

Regarded as a model for biological aggregations, equation~\eqref{eq:aggre} incorporates inter-individual social 
interactions such as long-range attraction and short-range repulsion, through the aggregation potential $K$.  
The properties of the potential (symmetry, regularity, monotonicity, etc) are essential in studying issues 
such as the well-posedness~\cite{BodnarVelasquez2, BertozziLaurent, BertozziLaurentRosado} or the long-time 
behaviour~\cite{Burger:DiFrancesco, LeToBe2009}  of solutions to model equation~\eqref{eq:aggre}.
In particular, a large component of the research on this model dealt with attractive potentials $K$ which lead 
to solutions that blow-up (in finite or infinite time) by mass concentration, into one or several Dirac
distributions~\cite{FeRa10,BertozziCarilloLaurent, HuBe2010}. 

It is essential however for an aggregation model to be able to capture solutions with
biologically relevant features. As pointed out by Mogilner and Keshet in their
seminal work~\cite{M&K} on the class of models discussed here, such desired
characteristics include: finite densities, sharp boundaries, relatively
constant internal population and long lifetimes. The difficulty in finding such solutions to model \eqref{eq:aggre}
 has been indicated as a ``challenge" in previous literature  \cite{Topaz:Bertozzi,LiRodrigo2009}, and in fact there is only a handful of
works that address this issue. Topaz and collaborators~\cite{LeToBe2009, BeTo2011} derived explicit
swarm equilibria that arise in the one-dimensional model with Morse-type
potentials (in the form of decaying exponentials), but their explicit calculations do not extend to higher dimensions. Other works illustrate
asymptotic vortex states in $2D$~\cite{Topaz:Bertozzi} and clumps
(aggregations with compact support) in a nonlocal model that includes
density-dependent diffusion~\cite{TBL}.

A recent publication of the authors~\cite{FeHuKo11} considered an interaction potential 
$K$ for which equilibria of the aggregation model~\eqref{eq:aggre} have the desired characteristics indicated above.
More specifically, the kernel investigated in~\cite{FeHuKo11} has a repulsion component in the form of the Newtonian 
potential and attraction given by a power law\footnote{See Section \ref{sect:discussion} for a discussion on how the potential can be modified to avoid the biologically unrealistic growth of attraction with distance when $q>0$.}:
\begin{equation}
\label{eq:kernel}
K(x) = \phi(x) + \frac{1}{q} |x|^q.
\end{equation}
Here, $\phi(x)$ is the free-space Green's function of the negative Laplace operator  $-\Delta$:
\begin{equation}
\phi(x)=
\begin{cases}
-\frac{1}{2} |x|, & n=1\\
-\frac{1}{2\pi}\ln|x|, & n=2\\
\frac{1}{n(n-2)\omega_{n}}\frac{1}{|x|^{n-2}},\quad & n\geq3,
\end{cases}
\label{eqn:phi}
\end{equation}
and $q$ is a real exponent, $q \geq 2$. In~\eqref{eqn:phi}, $n$ is the number of space dimensions and $\omega_n$ denotes 
the volume of the unit ball in $\mathbb{R}^n$. 

We summarize briefly some of the results from~\cite{FeHuKo11} that are relevant to the present article. For $q=2$, 
the equilibrium density of~\eqref{eq:aggre}-\eqref{eq:kernel} is uniform inside a ball of $\mathbb{R}^n$ and zero 
outside it. In this case, the method of characteristics was used to solve explicitly the dynamics corresponding to 
radially symmetric initial conditions in any dimension. This showed the global stability within the class
of radially symmetric solutions of the constant steady state. The explicit calculations did not extend to general 
exponent $q>2$,  but the existence of a unique radially symmetric equilibrium of compact support was shown, after 
casting the equilibrium problem as an eigenvalue problem for an integral operator and applying the Krein-Rutman 
theorem. Some explicit calculations of the equilibria could be performed  however for the special subcase when $q$ 
is even. In addition to these studies on equilibria, the global well-posedness of solutions 
to~\eqref{eq:aggre}-\eqref{eq:kernel} (with $q \geq 2$) was shown by borrowing  techniques used in the 
analysis of incompressible Euler equations \cite{MajdaBertozzi}. 

The main purpose of the work from~\cite{FeHuKo11} was to design attractive-repulsive potentials that yield 
equilibrium states of finite densities and compact support. In this respect, the attraction component $\frac{1}
{q}|x|^{q}$ of the potential was specifically designed to counter-balance the singular Newtonian repulsion. 
Newtonian (attractive) potentials for model~\eqref{eq:aggre} were considered in 
~\cite{DuZhang03, MaZh2005} in the context of vortex motions in two-dimensional
superfluids. The main concern of these works was the well-posedness of solutions, in particular concentration 
and singularity formation in measure-valued solutions.  Very recently, Newtonian potentials were also considered 
in aggregation models~\cite{BertozziGarnettLaurent,BertozziLaurentLeger}. In~\cite{BertozziLaurentLeger}, the authors 
study patch solutions and they consider separately the case of an attractive Newtonian potential (with finite time 
concentration) and of a repulsive Newtonian potential (with spreading to a circular/ spherical aggregation patch). 

The purpose of the present research is (i) to extend the interaction potential \eqref{eq:kernel}-\eqref{eqn:phi} studied in~\cite{FeHuKo11} to 
allow for more general attractive forces\footnote{In the special case $q=0$ we take $K(x) = \phi(x) + \ln |x|$.} ($q>2-n$) and (ii) to investigate analytically and numerically the 
properties of the equilibria to the aggregation model~\eqref{eq:aggre}-\eqref{eq:kernel} for $q >2-n$. Remarkably, 
the intricate balance between the power-law attraction and the singular repulsion provides the model with a very 
interesting and at the same time biologically relevant set of steady states. For all values of $q \in (2-n,\infty)$, 
the aggregation model has a unique steady state supported in a ball. This steady state is radial and monotone in the 
radial coordinate. More specifically, the equilibria are decreasing about the origin for $2-n<q<2$ and increasing 
for $q>2$, while $q=2$ corresponds to a constant equilibrium density. Figure~\ref{fig:variousq} shows the equilibrium 
solutions in three dimensions for various values of $q$; all shown equilibria have mass 1. The limits $q \to \infty$ 
and $q \searrow 2-n$, that is, when attraction becomes infinitely strong (at large distances) or as singular as the (Newtonian) repulsion,  
are particularly interesting. As $q\to \infty$, the radii of the equilibria approach a constant, but the qualitative 
features change dramatically, as mass aggregates toward the edge of the swarm, leaving an increasingly void region 
in the centre --- this effect can be observed in Figure~\ref{fig:variousq} ($q=20,40,80$). As $q \searrow 2-n$, the 
radii of equilibria approach $0$ and mass concentrates at the origin -- see Figure~\ref{fig:variousq} ($q=1.5,1,0.5$). 
Numerical  simulations suggest that all these equilibria are global attractors for the dynamics 
of~\eqref{eq:aggre}-\eqref{eq:kernel}, which motivates and gives strong grounds to the studies of the present work.
 
 \begin{figure}[htb]
 \label{fig:variousq}
 \begin{center}
  \includegraphics[totalheight=0.28\textheight]{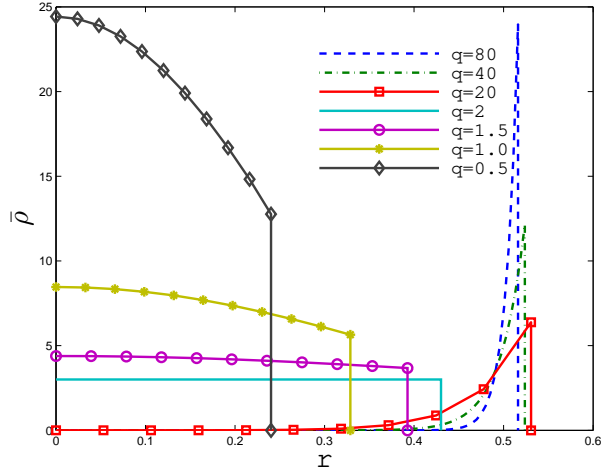}
 \end{center}
\caption{Radially symmetric equilibria of~\eqref{eq:aggre}-\eqref{eq:kernel} in three dimensions, for 
various values of $q$.The equilibria are monotone in the radial coordinate: decreasing about the origin 
for $2-n<q<2$, increasing for $q>2$, and constant for $q=2$. As $q\to \infty$, the radii of the equilibria 
approach a constant, and mass aggregates toward the edge of the swarm, leaving an increasingly void region in 
the centre. As $q \searrow 2-n$, the radii of equilibria approach $0$ and mass concentrates at the origin. 
Numerics suggests that all these equilibria are global attractors for the dynamics of~\eqref{eq:aggre}-\eqref{eq:kernel}.}
\end{figure}

There are a few  very recent studies of equilibria of~\eqref{eq:aggre} with attractive-repulsive potentials 
in power-law form that closely relate to our work. In~\cite{Balague_etal}, the authors study the stability 
of spherical shell equilibria for potentials in the form $K(x) = \frac{1}{q} |x|^q  - \frac{1}{p} |x|^p$, 
where $2-n<p<q$ (short range repulsion and long range attraction). While the attraction component, $|x|^q/q$, 
is the same as in~\eqref{eq:kernel}, the singularity of the repulsion term is ``better" than Newtonian ($p>2-n$). 
Shell steady states are shown to exist and be locally stable under certain conditions on the exponents $p$ and $q$. 
The methods from~\cite{Balague_etal} do not apply to Newtonian singularities. Other recent works that involve 
model~\eqref{eq:aggre} with power-law potentials focus on pattern formation and linear stability analysis of spherical 
shells~\cite{KoSuUmBe2011, Brecht_etal2011}.

 The results of the paper are as follows. Well-posedness of solutions to the aggregation 
model~\eqref{eq:aggre}-\eqref{eq:kernel} (with $q >2-n$) is studied in Section~\ref{sect:well-posedness} by using 
analogies with the incompressible fluid flow equations~\cite{MajdaBertozzi, FeHuKo11, BertozziLaurentLeger}. For 
all values of $q \in (2-n,\infty)$, we show in Section~\ref{subsect:exist} that there exist unique equilibria 
supported on a ball of $\mathbb{R}^n$. In Section~\ref{subsect:monotonicity} we employ the {\em method of moving 
planes}~\cite{GidasNiNirenberg} to prove that such equilibria are radially symmetric and monotone in the radial 
coordinate. In Section~\ref{sect:asy-num} we performed careful asymptotic and numerical investigations of the 
equilibria. Our studies address two issues. The first  one is the behaviour of equilibria as $q \to \infty$ 
and $q\to 2-n$. As expected, the two limiting cases give very different asymptotic behaviours.  The second issue 
addressed in Section~\ref{sect:asy-num} is  the stability of the equilibrium solutions.  The results regarding stability  are preliminary and entirely based on numerical observations. Based on all numerical 
experiments we performed, we conjecture that the equilibria studied in this paper are global attractors for 
solutions to~\eqref{eq:aggre}-\eqref{eq:kernel}.

\section{A priori bounds and well-posedness of solutions}
\label{sect:well-posedness}

We start by pointing out that the aggregation model~\eqref{eq:aggre}-\eqref{eq:kernel} has two 
important conservation properties. Denote the initial density by $\rho_{0}$:
\[
\rho(x,0)=\rho_{0}(x),\qquad x\in\mathbb{R}^{n}.
\]
The aggregation model~\eqref{eq:aggre}-\eqref{eq:kernel} satisfies:

(i) Conservation of mass:
\begin{equation}
\label{eqn:c-mass}
\int\rho(x,t) dx = M, \quad\text{ for all } t \geq0,
\end{equation}
where the constant $M$ denotes the initial mass $M = \int\rho_{0}(x) dx$.

\smallskip(ii) Conservation of centre of mass:
\begin{equation}
\int x\rho(x,t)dx=0,\quad\text{ for all }t\geq0, 
\label{eqn:c-cmass}%
\end{equation}
where we assume, without loss of generality, that the centre of mass of the
initial density is at the origin: $\int x\rho_{0}(x)dx=0$. 

Both properties follow directly from~\eqref{eq:aggre}. Property (ii) uses the radial symmetry of 
the potential. The two conservation properties will be used frequently in this article.

By introducing the notation:
\begin{align*}
f(x)&=-\nabla K(x), \\
\end{align*}
we write the aggregation model as %
\begin{subequations}
\label{ag2}%
\begin{gather}
\rho_{t}+\nabla\cdot(\rho v)=0,
\label{eqn:model2}\\
v=f \ast\rho, 
\label{eqn:v2} \\
f(x)=\left(  \frac{1}{n\omega_{n}}\frac{1}{|x|^{n-1}}-|x|^{q-1}\right)  \frac{x}{|x|}. 
\label{eqn:f-exp}
\end{gather}
\end{subequations}

This work makes extensive use of the Lagrangian formulation of the aggregation model~\eqref{ag2}, where 
dynamics is tracked along the characteristic curves, defined by:
\begin{equation}
\frac{d}{dt}X(\alpha,t)=v(X(\alpha,t),t),\qquad X(\alpha,0)=\alpha,
\label{eqn:char}%
\end{equation}
with velocity $v$ defined by~\eqref{eqn:v2} and \eqref{eqn:f-exp}.


\subsection{A priori bounds on density}
\label{subsect:bounds}
Expand $\nabla \cdot (\rho v) = v \cdot \nabla v + \rho \nabla \cdot v$ and write the evolution of 
the density $\rho(X(\alpha,t),t)$ along characteristics:
\begin{equation}
\label{eqn:ch-form}
\frac{d \rho}{dt} (X(\alpha,t),t)= - \rho \nabla \cdot v (X(\alpha,t),t). 
\end{equation}

Calculation of $\nabla \cdot v$ from \eqref{eqn:v2} and \eqref{eqn:f-exp} yields:
\begin{equation}
\label{eqn:div-vqneq2}
\nabla\cdot v = \rho- (n+q-2)\int_{\mathbb{R}^{n}}|x-y|^{q-2}\rho(y)dy.
\end{equation}

The continuity equation~\eqref{eqn:model2} expresses the fact that
\begin{equation}
\label{eqn:rhoJ}
\rho(X(\alpha,t),t) J(\alpha,t) = \rho_{0}(\alpha),
\end{equation}
where
\[
J(\alpha,t) = \operatorname{det} \nabla_{\alpha}X
\]
is the Jacobian of the particle map $\alpha\to X(\alpha,t)$.

Define the maximum of the density 
\[
 \rho_{\text{max}}(t) = \sup_x \rho(x,t).
\]
We show that provided $\rho_{\text{max}}$ is bounded initially, it remains bounded above, uniformly in time. 
This was shown in~\cite{FeHuKo11} for $q \geq 2$ and here we extend the results to include $2-n<q<2$.

Let $q \in (2-n, 2)$. From the characteristic equation~\eqref{eqn:ch-form} for $\rho$ and the 
expression~\eqref{eqn:div-vqneq2} for $\nabla \cdot v$, we have, along particle trajectories:
\begin{align}
\frac{d\rho}{dt} &= 
 -\rho^2 +(n+q-2)\rho\int_{\mathbb{R}^n}|x-y|^{q-2}\rho(y)dy \label{eqn:rho-evol} \\
&=-\rho^2
+(n+q-2)\rho\left [\int_{|x-y| < r_*}|x-y|^{q-2}\rho(y)dy 
+\int_{|x-y| > r_*}|x-y|^{q-2}\rho(y)dy \right] \nonumber \\
&\leq -\rho^2
+(n+q-2)\rho\left[ \rho_{\text{max}} \int_{|x-y| < r_*} |x-y|^{q-2}dy
+r_*^{q-2} \int_{|x-y| > r_*} \rho(y)dy\right], \label{est:drho_dt}
\end{align}
where we used $q<2$ and $r_*>0$ will be chosen conveniently later. Use the following estimates on 
the integrals in the right-hand-side of~\eqref{est:drho_dt}:
\[
\int_{|x-y| <  r_*}|x-y|^{q-2} dy = \frac{n \omega_n}{n+q-2} \, r_*^{n+q-2}, \qquad \int_{|x-y| > r^*} \rho(y)dy \leq M,
\]
and choose $r_*$ to be:
\begin{equation}
\label{eqn:r*}
r_* = (\rho_{\text{max}})^{-1/n}.
\end{equation}
We find
\begin{equation*}
\frac{d\rho}{dt} \leq -\rho^2 + C\rho\rho_{\text{max}}^{(2-q)/n},
\end{equation*}
with 
\[
C =  n \omega_n+ M (n + q -2),
\]
resulting in the following inequality
\begin{equation}
\label{drho-max_dt}
\frac{d \rho_{\text{max}}}{dt} \leq C\rho_{\text{max}}^{(n+2-q)/n}-\rho_{\text{max}}^2.
\end{equation}
As $q>2-n$, the damping dominates the growth term in the right-hand-side of~\eqref{drho-max_dt}: $(n+2-q)/n <2$. 
The right-hand-side of~\eqref{drho-max_dt} becomes negative when $\rho_{\text{max}} > C^{(n+q-2)/n}$ and hence, 
regardless of the size of the support, the maximum density is bounded uniformly in time, provided it is initially bounded. 
\smallskip

{\bf Remark.} For $q \geq 2$, it was shown in~\cite{FeHuKo11} that the density has compact support uniformly in 
time, provided the initial density $\rho_0$ has compact support. This property was used to conclude global wellposedness 
of solutions. In the present study, where $2-n<q<2$, we could not show the compact support of solutions when $2-n<q\leq1$, 
but we managed to circumvent this by using the uniform $L^1$-bound of $\rho$.

We present briefly the argument that shows uniform compact support for $1<q<2$.  The density is
transported along characteristics (see equation \eqref{eqn:rhoJ}), so it is
enough to show that the trajectories $X(\alpha,t)$ that carry non-zero
densities remain within some compact set. Calculate using~\eqref{eqn:v2} and~\eqref{eqn:f-exp}:
\begin{equation}
\label{eq:sysmb1}x \cdot v(x,t) = \int_{\mathbb{R}^{n}} \frac{x \cdot
(x-y)}{n\omega_{n} |x-y|^{n}} \rho(y,t)dy -\int_{\mathbb{R}^{n}}x
\cdot(x-y)|x-y|^{q-2}\rho(y,t) dy.
\end{equation}
Define the maximum radius of support $R(t)$ as 
\[
R(t) = \max_{\alpha: \rho_0(\alpha) \ne 0} |X(\alpha,t)|,
\]
and evaluate~\eqref{eq:sysmb1} at $x=X(\alpha,t)$ on the boundary of the
support, i.e., $|X(\alpha,t)| = R(t) \geq|X(\beta,t)|$ for any $\beta$ such
that $\rho_{0}(\beta)>0$. The left-hand-side of~\eqref{eq:sysmb1} becomes
\[
X(\alpha,t) \cdot \frac{d}{dt} X(\alpha,t) = R \, \frac{dR}{dt}.
\]
We estimate the first term in the right-hand-side of~\eqref{eq:sysmb1} as follows:
\begin{align*}
\int_{\mathbb{R}^{n}} \frac{x\cdot(x-y)}{n\omega_{n}
|x-y|^{n}} \rho(y,t)dy & \leq \frac{R}{n \omega_n} \int_{|x-y|<1} \frac{1}{|x-y|^{n-1}} \rho(y) dy + 
\frac{R}{n \omega_n}\int_{|x-y|>1} \frac{1}{|x-y|^{n-1}} \rho(y) dy \nonumber \\ 
& \leq  \rho _{\text{max}} R + \frac{M}{n \omega_n}R.
\end{align*}
For the second term, use $|x-y| \leq 2R$ and $x \cdot (x-y) \geq 0$, for $|x|=R$ on the boundary of the support 
and $y$ in the support of $\rho$, to find
\begin{align*}
-\int_{\mathbb{R}^{n}}x \cdot(x-y)|x-y|^{q-2}\rho(y,t) dy & \leq -( 2R)^{q-2} \int_{\mathbb{R}^{n}}x \cdot(x-y) \rho(y) dy \\
& = - 2^{q-2} M R^q,
\end{align*}
where we used conservation of mass~\eqref{eqn:c-mass} and centre of mass~\eqref{eqn:c-cmass} to go from the second to 
the last line. Using the estimates in~\eqref{eq:sysmb1}, derive
\[
\frac{d R }{dt} \leq  \rho _{\text{max}} + \frac{M}{n \omega_n} -  2^{q-2} M R^{q-1}.
\]
Hence, the trajectories that carry non-zero densities will remain inside the disk of radius $R_{\text{max}}$, where 
\[
R_{\text{max}} = \left( \frac{\rho_{\text{max}} + M/(n \omega_n)}{2^{q-2}M} \right)^{\frac{1}{q-1}}.
\]


\subsection{Existence and uniqueness of solutions}
\label{subsect:ex-un}

To study well-posedness of solutions we use a Lagrangian approach and rewrite  the aggregation equation~\eqref{eq:aggre} 
in terms of particle trajectories. Then we regard the model as an ODE on a certain Banach space and infer local existence 
and uniqueness from Picard theorem. The setup of the ODE framework is the same as that used in~\cite{FeHuKo11} to study 
the case $q\geq 2$ and is inspired from the study of well-posedness of solutions to the incompressible Euler equation in 
Lagrangian formulation~\cite{MajdaBertozzi}. Extension to global existence is achieved through an argument similar to 
the well-known Beale-Kato-Majda blow-up criterion for incompressible flows~\cite{BealeKatoMajda}.

Make the change of variable $y=X(\beta,t)$ in the expression~\eqref{eqn:v2} for $v$, with $f$ given by~\eqref{eqn:f-exp}, 
and use~\eqref{eqn:rhoJ} to write the characteristic equation~\eqref{eqn:char} as
\begin{subequations}
\label{eqn:charODE}%
\begin{align}
\frac{d}{dt} X(\alpha,t)  &  = \mathcal{F}(X(\alpha,t))\\
X(\alpha,0)  &  = \alpha,
\end{align}
\end{subequations}
where the map $\mathcal{F}(X)$ is defined by
\begin{equation}
\label{eqn:F-qgen}
\mathcal{F}(X(\alpha,t)) = \int_{\mathbb{R}^{n}} \left(  \frac{1}{n
\omega_{n}} \frac{X(\alpha,t) - X(\beta,t)} {|X(\alpha,t) - X(\beta,t)|^{n}} -
|X(\alpha,t) - X(\beta,t)|^{q-2} (X(\alpha,t) - X(\beta,t)) \right)  \rho
_{0}(\beta) d \beta.
\end{equation}

System~\eqref{eqn:charODE}-\eqref{eqn:F-qgen} is a reformulation the PDE model~\eqref{ag2} in
terms of particle-trajectory equations.  Case $q \geq 2$ was studied in detail in~\cite{FeHuKo11}, by analogy 
with  the ODE setup of the incompressible Euler equations~\cite{MajdaBertozzi}.

Following \cite{MajdaBertozzi,FeHuKo11}, we consider the Banach space
\[
\mathcal{B} = \{ X : \mathbb{R}^{n} \to\mathbb{R}^{n} \text{ such that }
\|X\|_{1,\gamma} < \infty\},
\]
where $\|\cdot\|_{1,\gamma}$ is the norm defined by
\begin{equation}
\label{eqn:1gamma-norm}
\|X\|_{1,\gamma} = |X(0)| + \| \nabla_{\alpha}X
\|_{L^{\infty}} + |\nabla_{\alpha}X|_{\gamma}.
\end{equation}
Here, $|\cdot|$ is the H\"older seminorm
\[
|\nabla_{\alpha}X|_{\gamma}= \sup_{\alpha\neq\alpha^{^{\prime}}} \frac
{|\nabla_{\alpha}X(\alpha) - \nabla_{\alpha} X(\alpha^{\prime})|}{|\alpha-
\alpha^{\prime}|^{\gamma}}.
\]
Consider an open subset $\mathcal{O}_{L}$, of $\mathcal{B}$ defined by
\[
\mathcal{O}_{L} = \left\{  X \in\mathcal{B} \mid\inf_{\alpha}
\operatorname{det} \nabla_{\alpha}X(\alpha) > 1/L \text{ and } \|X\|_{1,\gamma
}<L \right\}  .
\]

The key ingredients to show local and global well-posedness of solutions are the properties of the convolution kernel 
\begin{equation}
\label{eqn:kr}
k(x)=\frac{1}{n\omega_{n}}\frac{x}{|x|^{n}},
\end{equation}
present in the repulsion component of~\eqref{eqn:f-exp}. In particular, $k$ is singular, homogeneous of degree $1-n$ and 
its gradient $P = \nabla k$ is homogeneous of degree $-n$ and defines a singular integral operator (SIO). The close 
analogy with incompressible fluid equations comes from the fact that a similar kernel appears in the Biot-Savart 
law~\cite{MajdaBertozzi}.

The local existence and uniqueness is stated by the following theorem.
\begin{thm}
\label{th:local} \textbf{(local existence and uniqueness)} Consider a
compactly supported initial density $\rho_{0} \in L^{\infty}(\mathbb{R}^{2})$,
with $|\rho_{0}|_{\gamma}<\infty$, for some $\gamma\in(0,1)$. Then for any
$L>0$, there exists $T(L)>0$ and a unique solution $X \in C^{1}((-T(L),T(L));
\mathcal{O}_{L})$ to~\eqref{eqn:charODE}-\eqref{eqn:F-qgen}, with $q>2-n$.
\end{thm}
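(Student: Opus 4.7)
The plan is to apply the Picard existence--uniqueness theorem for ODEs on the Banach space $\mathcal{B}$ to the system~\eqref{eqn:charODE}--\eqref{eqn:F-qgen}, viewed as $\dot X = \mathcal{F}(X)$ on the open set $\mathcal{O}_L$. This is the same strategy used in~\cite{MajdaBertozzi, FeHuKo11}; the new difficulty, relative to the case $q \geq 2$ treated in~\cite{FeHuKo11}, is that for $2-n < q < 2$ the attractive kernel $g(z):=|z|^{q-2}z$ is also singular at the origin. To apply Picard it suffices to show that $\mathcal{F}$ maps $\mathcal{O}_L$ into $\mathcal{B}$ and is Lipschitz there, with bounds depending only on $L$ and on $\rho_0$.

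First I would split $\mathcal{F} = \mathcal{F}_R - \mathcal{F}_A$ according to~\eqref{eqn:F-qgen}, where $\mathcal{F}_R$ carries the Newtonian kernel $k$ from~\eqref{eqn:kr} and $\mathcal{F}_A$ carries $g$. Pulling both integrals back to physical space via $y = X(\beta,t)$ (justified since $\det\nabla_\alpha X > 1/L$ on $\mathcal{O}_L$) converts them into convolutions with an Eulerian density $\rho$ satisfying $\|\rho\|_{L^\infty} \leq L\|\rho_0\|_{L^\infty}$ and $\|\rho\|_{L^1} = M$. The pointwise bound on $|\mathcal{F}(X)(0)|$ is then immediate because $|k(z)| \lesssim |z|^{1-n}$ and $|g(z)| \lesssim |z|^{q-1}$ are both locally integrable, the latter thanks to $q > 1-n$. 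The $L^\infty$ control of $\nabla_\alpha \mathcal{F}_A(X)$ rests on $|\nabla g(z)| \lesssim |z|^{q-2}$, which is locally integrable in $\mathbb{R}^n$ precisely when $q > 2-n$; this is the sharp role of the hypothesis on $q$.

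The main obstacle, exactly as in the incompressible Euler analogy of~\cite{MajdaBertozzi}, is controlling the gradient of $\mathcal{F}_R$ in the H\"older seminorm: formally differentiating under the integral produces the Calder\'on--Zygmund kernel $P = \nabla k$, homogeneous of degree $-n$ with zero spherical mean, which is only principal-value integrable. Here I would invoke the classical SIO bound $\|P\rho\|_{C^\gamma} \leq C\bigl(\|\rho\|_{L^\infty} + |\rho|_\gamma\bigr)$ on compactly supported H\"older functions, together with the chain rule, to express $\nabla_\alpha\mathcal{F}_R(X)(\alpha)$ as $(P\rho)(X(\alpha))\cdot\nabla_\alpha X(\alpha)$ plus a lower-order term; the resulting seminorm $|\nabla_\alpha \mathcal{F}_R(X)|_\gamma$ is then controlled by $\|X\|_{1,\gamma}$, $\|\rho_0\|_{L^\infty}$ and $|\rho_0|_\gamma$ via composition rules for H\"older maps on $\mathcal{O}_L$. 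The corresponding H\"older estimate for $\mathcal{F}_A$ is strictly easier: splitting the integral at radius $\delta = |\alpha-\alpha'|$ into near and far pieces and using $q-2 > -n$ yields the bound with no SIO machinery.

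Once these estimates are in place, the very same arguments applied to difference kernels give the Lipschitz bound $\|\mathcal{F}(X_1) - \mathcal{F}(X_2)\|_{1,\gamma} \leq C(L)\,\|X_1 - X_2\|_{1,\gamma}$ on $\mathcal{O}_L$, using that $k$ and $g$ are smooth away from the origin and that the lower bound on $\det\nabla_\alpha X$ prevents particles from colliding. Picard's theorem on $\mathcal{B}$ then produces a unique $C^1$ solution $X(\cdot,t) \in \mathcal{O}_L$ on a symmetric interval $(-T(L), T(L))$ with $T(L)$ depending only on $L$, $M$, $\|\rho_0\|_{L^\infty}$ and $|\rho_0|_\gamma$, as claimed.
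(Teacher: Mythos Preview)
Your proposal is correct and follows essentially the same route as the paper: reduce to Picard's theorem on $\mathcal{B}$ by showing $\mathcal{F}$ is bounded and Lipschitz on $\mathcal{O}_L$, handling the Newtonian repulsion via the SIO/Calder\'on--Zygmund machinery of~\cite{MajdaBertozzi} and observing that the attractive kernel, with $|\nabla g(z)|\lesssim |z|^{q-2}$ locally integrable exactly when $q>2-n$, is strictly milder. The only cosmetic difference is that the paper obtains the Lipschitz bound by directly estimating the Gateaux derivative $\mathcal{F}'(X)Y$ (exploiting that the factor $Y(\alpha)-Y(\beta)$ cancels one order of the singularity in $\nabla f$), whereas you phrase it as a difference estimate; the underlying computation is the same.
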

\begin{proof}
Following~\cite{FeHuKo11, MajdaBertozzi}, we show that the map $\mathcal{F}$ is bounded and locally Lipschitz 
continuous on $\mathcal{O}_{L}$. Local existence and uniqueness then follows from Picard theorem. Details are 
presented in the Appendix.
\end{proof}

We now use a continuation result of solutions to autonomous ODE's on Banach
spaces (Theorem 4.4 in~\cite{MajdaBertozzi}) to upgrade the result to global
existence. Inspecting the set $\mathcal{O}_{L}$ we infer that we cease to have
a solution at a finite time $T_{*}$ provided either $\inf_{\alpha}
\operatorname{det} \nabla_{\alpha}X(\alpha)$ becomes $0$ or $\|X\|_{1,\gamma}$
becomes unbounded as $t \to T_{*}$.

The first scenario is ruled out by the following proposition.
\begin{prop}
\label{prop:J} At any fixed time $t<\infty$, solutions of~\eqref{eqn:charODE}-\eqref{eqn:F-qgen}  satisfy
\[
\inf_{\alpha} \operatorname{det} \nabla_{\alpha}X(\alpha,t) \geq e^{-C t},
\]
where $C>0$ is a constant.
\end{prop}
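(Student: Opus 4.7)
The plan is to reduce the claim to a differential inequality for the Jacobian along a single characteristic and then deduce a pointwise lower bound on the divergence of the velocity.

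First I would recall the standard evolution equation for the Jacobian $J(\alpha,t)=\det\nabla_\alpha X$ along characteristics. Differentiating $J$ in $t$ using $\dot X=v(X,t)$ and the cofactor expansion gives
\begin{equation*}
\frac{d}{dt} J(\alpha,t) = (\nabla\cdot v)(X(\alpha,t),t)\,J(\alpha,t),\qquad J(\alpha,0)=1.
\end{equation*}
Thus if I can produce a constant $C>0$ such that $\nabla\cdot v(x,t)\geq -C$ for every $x$ in the support of $\rho(\cdot,t)$ and every $t\geq 0$, Gr\"onwall immediately yields $J(\alpha,t)\geq e^{-Ct}$, which is exactly the claim.

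Next I would use the explicit expression \eqref{eqn:div-vqneq2},
\begin{equation*}
\nabla\cdot v(x,t) = \rho(x,t) - (n+q-2)\int_{\mathbb{R}^n} |x-y|^{q-2}\rho(y,t)\,dy.
\end{equation*}
Since $\rho\geq 0$ I may drop the first term and it suffices to bound the second integral from above uniformly in $t$. Here the argument splits according to the sign of $q-2$. If $q\geq 2$, the kernel $|x-y|^{q-2}$ is non-singular and increasing; combining the uniform compact support of $\rho$ established in~\cite{FeHuKo11} with conservation of mass gives the pointwise bound $(2R_{\max})^{q-2}M$. If $2-n<q<2$, I would mimic the splitting used in Section~\ref{subsect:bounds}: write the integral as a sum over $\{|x-y|<r_*\}$ and $\{|x-y|\geq r_*\}$, estimate the first piece by $\rho_{\max}(t)\cdot\frac{n\omega_n}{n+q-2}r_*^{n+q-2}$ and the second by $r_*^{q-2}M$, and optimize at $r_*=\rho_{\max}^{-1/n}$ to obtain a bound of the form $C'\rho_{\max}(t)^{(2-q)/n}$. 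The uniform-in-time bound on $\rho_{\max}$ proved in Section~\ref{subsect:bounds} then makes this independent of $t$.

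The main obstacle is this case-splitting on $q$: for $2-n<q\leq 1$ the support of $\rho$ is not known to remain compact, so the naive bound $|x-y|^{q-2}\leq r_*^{q-2}$ on the far region must be paired with the $L^1$ control of $\rho$ rather than any control on the diameter of the support, and the near region must be handled with the $L^\infty$ control on $\rho$ — which is precisely what the splitting with $r_*=\rho_{\max}^{-1/n}$ achieves. Once the uniform lower bound $\nabla\cdot v\geq -C$ is in hand, the proof is finished by integrating the linear ODE for $J$ along each characteristic and taking the infimum over $\alpha$.
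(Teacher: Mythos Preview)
Your proposal is correct and follows essentially the same route as the paper: the same ODE for $J$, the same explicit formula \eqref{eqn:div-vqneq2} for $\nabla\cdot v$, and the same near/far splitting at $r_*=\rho_{\max}^{-1/n}$ from Section~\ref{subsect:bounds} combined with the uniform-in-time bound on $\rho_{\max}$. The only cosmetic difference is that the paper bounds $|\nabla\cdot v|$ (yielding $|\nabla\cdot v|\leq \|\rho\|_{L^\infty}+C\|\rho\|_{L^\infty}^{(2-q)/n}$), whereas you drop the nonnegative term $\rho$ and bound $\nabla\cdot v$ only from below, which is all that is needed for the claimed inequality.
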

\begin{proof}
Use the differential equation that $J$ satisfies,
\[
\frac{d}{dt} J(\alpha,t) = J(\alpha,t) \nabla \cdot v(X(\alpha,t),t),
\]
to derive
\[
J(\alpha,t)=\exp\left( \int_{0}^{t} \nabla \cdot v(X(\alpha,s),s)d s\right).
\]
Case $q>2$ was considered in~\cite{FeHuKo11}. For $2-n<q<2$,  one can use~\eqref{eqn:div-vqneq2} and 
similar calculations to those leading to~\eqref{est:drho_dt}-\eqref{drho-max_dt}, to derive
\[
|\nabla \cdot v| \leq \| \rho \|_{L^\infty} + C \| \rho \|_{L^\infty}^{\frac{2-q}{n}}.
\]
As  $\| \rho \|_{L^\infty}$ is uniformly bounded in time (see Section~\ref{subsect:bounds}), we find 
\begin{equation*}
J(\alpha,t)   \geq \exp\left(  - C  t \right),
\end{equation*}
with $C>0$.
\end{proof}

The second scenario for the break-up of the solution (finite-time blow-up of
$\|X\|_{1,\gamma}$) will be treated as in Chapter 4.2 of~\cite{MajdaBertozzi}. This procedure was used in~\cite{FeHuKo11} to 
study global well-posedness of solutions to~\eqref{eqn:charODE}, \eqref{eqn:F-qgen}
when $q \geq 2$. In summary, $\|X\|_{1,\gamma}$ can be shown to remain bounded for all finite times,  
provided $\int_0^t \| \rho(\cdot,s) ds\| ds<\infty$, for all $t$. This is an analogue of the Beale-Kato-Majda 
condition for global existence of solutions to incompressible Euler equations~\cite{BealeKatoMajda, MajdaBertozzi}. Our 
argument is adapted from the analysis of the incompressible fluid equations presented in Chapter 4 of~\cite{MajdaBertozzi}.

A first a priori bound is provided by the following proposition.
\begin{prop}
\label{prop:1gamma-norm} Provided $\int_{0}^{t} \| \nabla v (\cdot,
s)\|_{L^{\infty}}$ has an a priori bound, $\| \nabla_{\alpha}X(\cdot
,t)\|_{L^{\infty}}$ and $| \nabla_{\alpha}X(\cdot,t)|_{\gamma}$ are a priori bounded.
\end{prop}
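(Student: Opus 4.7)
The plan is to differentiate the characteristic equation \eqref{eqn:char} with respect to the Lagrangian variable $\alpha$ and run Gronwall-type arguments, in direct analogy with the incompressible Euler setup of Chapter 4 of \cite{MajdaBertozzi}. Differentiating $\tfrac{d}{dt}X(\alpha,t)=v(X(\alpha,t),t)$ in $\alpha$ yields the matrix-valued linear ODE
\begin{equation*}
\frac{d}{dt}\nabla_\alpha X(\alpha,t) = (\nabla v)(X(\alpha,t),t)\,\nabla_\alpha X(\alpha,t),
\end{equation*}
with $\nabla_\alpha X(\alpha,0)=I$. Taking norms pointwise in $\alpha$ and applying Gronwall immediately gives
\begin{equation*}
\|\nabla_\alpha X(\cdot,t)\|_{L^\infty} \leq \exp\Bigl(\int_0^t \|\nabla v(\cdot,s)\|_{L^\infty}\,ds\Bigr),
\end{equation*}
which is a priori bounded by the hypothesis of the proposition.

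For the H\"older seminorm I would subtract the ODE at two points $\alpha,\alpha'$ and add and subtract $(\nabla v)(X(\alpha))\nabla_\alpha X(\alpha')$ to write the right-hand side as
\begin{equation*}
(\nabla v)(X(\alpha))\bigl[\nabla_\alpha X(\alpha)-\nabla_\alpha X(\alpha')\bigr] \;+\; \bigl[(\nabla v)(X(\alpha))-(\nabla v)(X(\alpha'))\bigr]\nabla_\alpha X(\alpha').
\end{equation*}
The first piece is bounded by $\|\nabla v\|_{L^\infty}\,|\nabla_\alpha X|_\gamma\,|\alpha-\alpha'|^\gamma$. In the second piece, the Lipschitz bound $|X(\alpha)-X(\alpha')|\leq\|\nabla_\alpha X\|_{L^\infty}|\alpha-\alpha'|$ combined with the H\"older regularity of $\nabla v$ gives a bound of order $|\nabla v|_\gamma\,\|\nabla_\alpha X\|_{L^\infty}^{1+\gamma}\,|\alpha-\alpha'|^\gamma$. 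Dividing by $|\alpha-\alpha'|^\gamma$, taking the supremum over $\alpha\neq\alpha'$, and invoking Gronwall then closes the estimate for $|\nabla_\alpha X|_\gamma$.

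The main obstacle is that a bound on $|\nabla v(\cdot,t)|_\gamma$ is not furnished by the proposition's hypothesis and must be supplied separately. Writing $\nabla v = \nabla k*\rho - \nabla(|x|^{q-2}x)*\rho$ from \eqref{eqn:v2}--\eqref{eqn:f-exp}, the first factor is a classical singular integral operator (as emphasized in the paragraph preceding Theorem \ref{th:local}) whose H\"older norm obeys the standard SIO estimate $|\nabla k*\rho|_\gamma \leq C(\|\rho\|_{L^\infty}+|\rho|_\gamma)$, typically with a logarithmic correction; the second contribution is a convolution with a locally integrable power kernel (since $q>2-n$) and is straightforward to handle. In turn, $|\rho(\cdot,t)|_\gamma$ is controlled through the transport identity \eqref{eqn:rhoJ} together with the lower bound on $J$ from Proposition \ref{prop:J}, the uniform bound on $\rho_{\text{max}}$ from Section \ref{subsect:bounds}, and the $L^\infty$ estimate on $\nabla_\alpha X$ just obtained. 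Assembling these ingredients yields a coupled Gronwall-type inequality for $|\rho|_\gamma$ and $|\nabla_\alpha X|_\gamma$ (with a possible logarithmic singular-integral correction) that remains finite on any bounded time interval, delivering the claimed a priori bound. The delicate step is the logarithmic closure of the SIO estimate, a feature inherited directly from the Euler analysis.
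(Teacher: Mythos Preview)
Your approach is essentially the one the paper defers to (Proposition 4.3 in \cite{MajdaBertozzi}, Proposition 2.3 in \cite{FeHuKo11}): Gronwall for $\|\nabla_\alpha X\|_{L^\infty}$, then control of $|\nabla v|_\gamma$ through the SIO estimate and $|\rho|_\gamma$, closing with a linear Gronwall for $|\nabla_\alpha X|_\gamma$. One small correction: the Calder\'on--Zygmund H\"older bound \eqref{eqn:estimate3}, $|P[\rho]|_\gamma \leq c\,|\rho|_\gamma$, carries \emph{no} logarithmic loss; the logarithm sits in the $L^\infty$ estimate \eqref{eqn:estimate2} for $\|\nabla v\|_{L^\infty}$ and is the business of Proposition~\ref{lemma:BKM}, not of this one. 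With that in place your differential inequality for $|\nabla_\alpha X|_\gamma$ is genuinely linear (coefficients depending only on $\|\nabla v\|_{L^\infty}$ and on $|\nabla v|_\gamma$, the latter bounded via $|\rho|_\gamma$, which in turn is bounded by $\exp\bigl(c\int_0^t\|\nabla v\|_{L^\infty}\bigr)$ as in the proof of Proposition~\ref{lemma:BKM}), and closes directly from the hypothesis without any logarithmic subtlety.
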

\begin{proof} The proof for $q=2$ was presented in detail in~\cite{FeHuKo11} (see Proposition 2.3). Extending 
it to general $q>2-n$ does not pose any difficulties and we omit the details. See also Proposition 4.3 
from~\cite{MajdaBertozzi} for the corresponding result in the context of incompressible Euler equations.
\end{proof}

The Beale-Kato-Majda condition for global existence of incompressible Euler equations is an a priori control on the
time-integral of the supremum norm of vorticity. In the context of our
aggregation model~\eqref{eqn:charODE}-\eqref{eqn:F-qgen}, this condition will be
replaced by an a priori bound on $\int_{0}^{t} \left\|  \rho(\cdot,s)
\right\|  _{L^{\infty}} ds$.

\begin{prop}
\label{lemma:BKM} A sufficient condition for $\int_{0}^{t} \| \nabla v (\cdot,
s)\|_{L^{\infty}} ds $ to be a priori bounded is an a apriori bound on
$\int_{0}^{t} \| \rho(\cdot, s)\|_{L^{\infty}} ds $. More specifically,
\[
\int_{0}^{t} \| \nabla v (\cdot, s)\|_{L^{\infty}} ds \leq e^{C(\rho_{0}%
)\int_{0}^{t} \| \rho(\cdot, s)\|_{L^{\infty}} ds },
\]
where $C(\rho_{0})$ is a constant that depends on the initial density only.
\end{prop}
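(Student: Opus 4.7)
The plan is to derive a logarithmic Calder\'on--Zygmund type estimate for $\|\nabla v\|_{L^\infty}$ in terms of $\|\rho\|_{L^\infty}$ and the H\"older seminorm $|\rho|_\gamma$, and then to close the argument by controlling $|\rho|_\gamma$ via the flow map and invoking Proposition~\ref{prop:1gamma-norm} together with a log--Gronwall inequality.

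\textbf{Step 1: Decomposition of $\nabla v$.} From $v=f\ast\rho$ with $f=-\nabla K$, I split $\nabla v$ into the contribution of the repulsive Newtonian part and that of the power-law attractive part. For the attractive part, $\nabla(|x|^{q-2}x)$ is homogeneous of order $q-2$, hence locally integrable precisely because $q>2-n$, and since $\rho(\cdot,t)$ has compact support (uniformly in time, by Section~\ref{subsect:bounds}), its convolution with $\rho$ is bounded directly by a multiple of $M+\|\rho\|_{L^\infty}$ with constant depending only on the (uniformly bounded) support radius $R_{\max}$. For the repulsive part, $P:=\nabla k$ with $k$ as in~\eqref{eqn:kr} is a genuine singular integral operator, homogeneous of degree $-n$ with mean zero on spheres; this is the term requiring care.

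\textbf{Step 2: The logarithmic SIO estimate.} Following the classical Calder\'on--Zygmund potential-theory argument (cf.\ Lemma~4.6 in~\cite{MajdaBertozzi}), I split the convolution $(P\ast\rho)(x)$ into three regions $|x-y|<r_1$, $r_1<|x-y|<r_2$, $|x-y|>r_2$. The inner region is handled using the mean-zero cancellation of $P$ against $\rho(y)-\rho(x)$ and the H\"older regularity of $\rho$, producing a contribution $\lesssim r_1^\gamma|\rho|_\gamma$. The annular region contributes $\lesssim \|\rho\|_{L^\infty}\log(r_2/r_1)$, and the far-field region contributes $\lesssim r_2^{-n}\|\rho\|_{L^1}=r_2^{-n}M$. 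Optimizing $r_1,r_2$ yields
\[
\|\nabla v(\cdot,t)\|_{L^\infty}\leq C\bigl(M+\|\rho\|_{L^\infty}\bigr)+C\|\rho\|_{L^\infty}\log^+\!\Bigl(\tfrac{|\rho(\cdot,t)|_\gamma}{\|\rho(\cdot,t)\|_{L^\infty}}\Bigr),
\]
with $C$ depending only on the (bounded) support radius and $M$.

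\textbf{Step 3: Controlling $|\rho|_\gamma$ by the flow.} Since $\rho(X(\alpha,t),t)J(\alpha,t)=\rho_0(\alpha)$ by~\eqref{eqn:rhoJ}, the H\"older seminorm $|\rho(\cdot,t)|_\gamma$ is controlled in the usual way by $|\rho_0|_\gamma$, $\|\nabla_\alpha X\|_{L^\infty}$, and $\|J^{-1}\|_{L^\infty}$. The latter is bounded by Proposition~\ref{prop:J}, while the former is bounded via Proposition~\ref{prop:1gamma-norm}: there exists an increasing function $\Phi$ (depending on $\rho_0$) with
\[
|\rho(\cdot,t)|_\gamma\;\leq\;\Phi\!\left(\int_0^t\|\nabla v(\cdot,s)\|_{L^\infty}\,ds\right),
\]
and, crucially, $\log\Phi$ is linear in its argument, so $\log|\rho(\cdot,t)|_\gamma\leq C(\rho_0)\bigl(1+\int_0^t\|\nabla v\|_{L^\infty}ds\bigr)$.

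\textbf{Step 4: Closing via log--Gronwall.} Combining Steps~2 and 3 and setting $g(t):=\int_0^t\|\nabla v(\cdot,s)\|_{L^\infty}ds$, I obtain
\[
\frac{dg}{dt}\leq C(\rho_0)\|\rho(\cdot,t)\|_{L^\infty}\bigl(1+g(t)\bigr),
\]
modulo lower-order terms absorbed into the constant. Gronwall's inequality then gives
\[
1+g(t)\;\leq\;\bigl(1+g(0)\bigr)\exp\!\left(C(\rho_0)\int_0^t\|\rho(\cdot,s)\|_{L^\infty}\,ds\right),
\]
which is precisely the claimed bound (after adjusting the constant).

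\textbf{Main obstacle.} The delicate step is the logarithmic SIO estimate in Step~2 and, in tandem, verifying that the bound on $|\rho(\cdot,t)|_\gamma$ from Proposition~\ref{prop:1gamma-norm} is only \emph{exponential} in $\int_0^t\|\nabla v\|_{L^\infty}ds$, so that the $\log$ in Step~2 yields a \emph{linear} dependence on $g(t)$ and Gronwall's inequality can be closed. The attractive part and the far-field contributions are routine because compact support of $\rho$ and conservation of mass provide automatic uniform bounds.
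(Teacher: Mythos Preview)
Your proposal is correct and follows essentially the same Beale--Kato--Majda route as the paper: the logarithmic SIO estimate on the Newtonian part (the paper's \eqref{eqn:estimate2}, optimized in $\epsilon$), the exponential control of $|\rho(\cdot,t)|_\gamma$ in terms of $\int_0^t\|\nabla v\|_{L^\infty}ds$ (the paper cites Lemma~4.8 of \cite{MajdaBertozzi} directly rather than routing through Proposition~\ref{prop:1gamma-norm}, but the content is the same), and the log--Gronwall closure are identical in spirit and in detail.

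One small caveat: in Step~1 you invoke uniform-in-time compact support of $\rho$ to bound the attractive contribution, but the paper only establishes this for $q>1$ (see the Remark in Section~\ref{subsect:bounds}); for $2-n<q\leq 1$ it is not proved. This is harmless, however, because in that range $q-2<0$ and $\nabla(|x|^{q-2}x)$ decays at infinity, so the far-field is controlled by mass alone---exactly how the paper handles it (``the attraction part has a milder singularity and does not break the estimate'').
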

\begin{proof}
Case $q \geq 2$ was discussed in~\cite{FeHuKo11}. The proof for $2-n<q<2$ requires some 
slight adaptations from the corresponding result for fluids (see Theorem 4.3 and its proof in~\cite{MajdaBertozzi}). 
The repulsion component of $ \nabla v $ can be estimated using \eqref{eqn:estimate2}. The attraction part has a milder 
singularity and does not break the estimate, hence we have
\[
\| \nabla v (\cdot,t)\|_{L^\infty} \leq  c \left (  |\rho(\cdot,t)|_\gamma \epsilon^\gamma + \| 
\rho (\cdot,t)\|_{L^\infty} \log (1/\epsilon) \right) +M.
\]
Set $\epsilon=  |\rho(\cdot,t)|_\gamma^{-1/\gamma}$ to get
\[
\| \nabla v (\cdot,t)\|_{L^\infty} \leq \| \rho (\cdot,t)\|_{L^\infty} \left ( \log  |\rho(\cdot,t)|_\gamma + c \right).
\]
Lemma 4.8 in~\cite{MajdaBertozzi} can be trivially adapted to our context, resulting in the following inequality:
\[
| \rho(\cdot,t)|_{\gamma} \leq |\rho_0|_{\gamma} \exp \left( (c + \gamma) \int_0^t \| \nabla v (\cdot, s)\|_{L^\infty} ds\right).\]
By combining the last two inequalities we find
\[
\| \nabla v (\cdot,t)\|_{L^\infty} \leq C(\rho_0) \| \rho (\cdot,t)\|_{L^\infty} \left( 1 +  \int_0^t \| 
\nabla v (\cdot, s)\|_{L^\infty} ds\right).
\]
The desired inequality follows after division by $1 +  \int_0^t \| \nabla v (\cdot, s)\|_{L^\infty} ds$ and integration 
with respect to $t$.
\end{proof}

Finally, we have all the ingredients to prove global existence of solutions.
The result is given by the following theorem.
\begin{thm}
\label{thm:prop-genq} \textbf{(global existence and uniqueness)} Consider the trajectory equations~\eqref{eqn:charODE}, 
\eqref{eqn:F-qgen} with the Banach space setup and notations as above, and a
compactly supported initial density $\rho_{0} \in L^{\infty}(\mathbb{R}^{n})$,
with $|\rho_{0}|_{\gamma}<\infty$, for some $\gamma\in(0,1)$. Then, for every
$T$, there exists $L>0$ and a unique solution $X \in C^{1}([0,T);
\mathcal{O}_{L})$ to~\eqref{eqn:charODE}, \eqref{eqn:F-qgen} with $q>2-n$ (a unique
solution exists globally in time).
\end{thm}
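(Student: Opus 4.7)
The plan is to combine the local well-posedness result (Theorem~\ref{th:local}) with the standard continuation principle for autonomous ODEs on Banach spaces (Theorem~4.4 in~\cite{MajdaBertozzi}), and then rule out, one by one, each of the two possible obstructions to extending the local solution for all time. As already noted in the text, if the maximal time of existence $T_{*}$ is finite, then either $\inf_\alpha \det\nabla_\alpha X(\alpha,t)\to 0$ or $\|X(\cdot,t)\|_{1,\gamma}\to\infty$ as $t\to T_{*}$. The goal is to show that neither of these can occur on any bounded interval $[0,T)$.

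First I would invoke Proposition~\ref{prop:J} directly to eliminate the degeneracy of the particle map: at any finite time $t$, $\inf_\alpha J(\alpha,t)\ge e^{-Ct}>0$, so the Jacobian condition $\inf_\alpha\det\nabla_\alpha X>1/L$ cannot fail on $[0,T)$ as long as we are willing to choose $L$ large. This step is essentially already done and simply gets quoted.

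Next I would bootstrap through the chain of a priori estimates to control $\|X\|_{1,\gamma}$. The key observation is that, by the $L^\infty$-bound of Section~\ref{subsect:bounds}, the quantity $\|\rho(\cdot,t)\|_{L^\infty}$ is bounded uniformly in $t$ by a constant depending only on $\|\rho_0\|_{L^\infty}$ and $M$. Consequently $\int_0^t\|\rho(\cdot,s)\|_{L^\infty}\,ds$ grows at most linearly in $t$, so Proposition~\ref{lemma:BKM} yields an a priori bound on $\int_0^t\|\nabla v(\cdot,s)\|_{L^\infty}\,ds$ on every finite interval. Feeding this into Proposition~\ref{prop:1gamma-norm} then controls $\|\nabla_\alpha X(\cdot,t)\|_{L^\infty}$ and $|\nabla_\alpha X(\cdot,t)|_\gamma$ on $[0,T)$. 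To close the estimate for the full norm~\eqref{eqn:1gamma-norm}, I would estimate $|X(\alpha,t)|$ at $\alpha=0$ by integrating the velocity along its characteristic, using $\|v\|_{L^\infty}\le\|\nabla v\|_{L^\infty}\cdot\mathrm{diam}(\mathrm{supp}\,\rho)+C$ together with either the uniform compact support result (for $1<q<2$ and $q\ge 2$) or the uniform $L^1$ bound referenced in the Remark of Section~\ref{subsect:bounds} (for $2-n<q\le 1$).

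With both possible breakdown mechanisms excluded on $[0,T)$, one picks $L>0$ large enough so that the a priori bounds just obtained place the trajectory in $\mathcal{O}_L$ for all $t\in[0,T)$; the continuation theorem then produces a unique solution $X\in C^1([0,T);\mathcal{O}_L)$, which is exactly the claim. I expect the main technical subtlety to be the low-$q$ regime $2-n<q\le 1$, where we do not have uniform compact support and must argue that the attractive kernel $|x|^{q-1}x/|x|$ convolved with $\rho$ remains controlled; however, since this kernel is locally integrable and $\rho$ has finite mass and bounded $L^\infty$ norm, the required estimates go through with only cosmetic changes from the case already handled in~\cite{FeHuKo11}, so the theorem follows.
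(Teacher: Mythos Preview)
Your proposal is correct and follows essentially the same route as the paper: invoke Proposition~\ref{prop:J} to rule out Jacobian degeneracy, then chain the uniform $L^\infty$ bound on $\rho$ through Proposition~\ref{lemma:BKM} and Proposition~\ref{prop:1gamma-norm} to control $\|\nabla_\alpha X\|_{L^\infty}$ and $|\nabla_\alpha X|_\gamma$, and finally bound $|X(0,t)|$ by integrating the velocity along the characteristic, splitting into the cases $1\le q$ (uniform compact support) and $2-n<q<1$ (local integrability of the attractive kernel plus the $L^1$ bound on $\rho$).

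One minor comment: the intermediate inequality you wrote, $\|v\|_{L^\infty}\le\|\nabla v\|_{L^\infty}\cdot\mathrm{diam}(\mathrm{supp}\,\rho)+C$, is not the way to go and would be circular in the low-$q$ regime, where $\mathrm{diam}(\mathrm{supp}\,\rho)$ is precisely what you do not control. The paper (and your own final sentence) instead bounds $v=f\ast\rho$ directly by a near/far splitting, using $\|\rho\|_{L^\infty}$ on the near part and $\|\rho\|_{L^1}=M$ on the far part; this gives $|X(0,t)|\le c_1\int_0^t\|\rho(\cdot,s)\|_{L^\infty}\,ds+c_2 t$ without any reference to the support size. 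With that adjustment your argument is exactly the paper's.
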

\begin{proof}
Case $q \geq 2$ was studied in~\cite{FeHuKo11} and we focus here on the range $ 2-n<q<2$.

The solution $X$ is in the set $\mathcal{O}_L$ provided
\begin{equation}
\label{eqn:2cond}
\inf_{\alpha} \operatorname{det} \nabla_\alpha X(\alpha) > 1/L \quad \textrm{ and } \quad   \|X\|_{1,\gamma}<L.
\end{equation}
Using Proposition~\ref{prop:J}, the first condition is satisfied provided we choose $L>e^{CT}$. We now investigate 
the second condition in~\eqref{eqn:2cond}. Start by inspecting the first term in~\eqref{eqn:1gamma-norm}, $|X(0,t)|$. 
Integrate~\eqref{eqn:char} to get
\begin{equation}
\label{eqn:X0}
X(0,t) =  \int_0^t v (X(0,s),s)  ds.
\end{equation}
The repulsion component of $v$ can be bounded in terms of  $\| \rho \|_{L^\infty}$ using~\eqref{eqn:estimate1}. 
The attraction component also has a uniform bound. To show this  we distinguish two 
cases: (i) $2-n<q<1$ and (ii) $1 \leq q <2$, and inspect the attraction part of $v$, i.e., $-|x|^{q-2} x \ast \rho$. In case (i),
\begin{align*}
\left | |x|^{q-2} x \ast \rho \right | &\leq \int_{|x-y|<1} \frac{1}{|x-y|^{1-q}} \rho(y) dy + \int_{|x-y|>1} 
\frac{1}{|x-y|^{1-q}} \rho(y) dy \\
&\leq  c \| \rho \|_{L^\infty }  + M.
\end{align*}
In case (ii) solutions $\rho$ are compactly supported (uniformly in time) in a ball of radius $R$, provided the 
initial density $\rho_0$ is --- see Remark in Section~\ref{subsect:bounds}. Hence,
\[
\left | |x|^{q-2} x \ast \rho \right | \leq (2R)^{q-1} M.
\]
Using~\eqref{eqn:X0} we derive
\begin{equation}
\label{eqn:estX0}
|X(0,t)| \leq c_1  \int_0^t  \| \rho(\cdot,s) \|_{L^\infty} ds + c_2 t,
\end{equation}
The control of  $\|X\|_{1,\gamma}$ now follows from~\eqref{eqn:estX0}, Propositions~\ref{prop:1gamma-norm} and~\ref{lemma:BKM}.  
More precisely, by redefining the constants appropriately, one can derive
\begin{equation}
\label{eqn:estX-1gamma}
\|X(\cdot,t) \|_{1,\gamma} \leq C e^{c_1t} e^{c_2 e^{c_3  \int_0^t  {\| \rho(\cdot,s) \|_{L^\infty}} ds}}.
\end{equation}
Given that $\| \rho(\cdot,s) \|_{L^\infty} $ is uniformly bounded, we can choose the constant $L$ large enough such 
that  $\|X(\cdot,t)\|_{1,\gamma}<L$, for all $t \in [0,T)$.

\end{proof}


\section{Radially symmetric steady states}
\label{sect:sstates}

In this section we show that the aggregation model admits a unique radially symmetric
steady state $\bar{\rho}$ supported on a ball of  $\mathbb{R}^n$. For $2-n<q<2$, we prove that these steady states are monotonically decreasing about the origin, while for $q>2$ they are increasing about it. Case $q=2$ corresponds
 to constant solutions in a ball \cite{FeHuKo11}. We further study these equilibria using numerical and asymptotic methods in Section \ref{sect:asy-num}.


\subsection{Existence and uniqueness of equilibria supported on a ball}
\label{subsect:exist}

Suppose that $\bar{\rho}$ is a steady state with support the  ball $B(0,R)$ centred at the origin, of radius $R$. 
The velocity $v$ is zero in $B(0,R)$, so its divergence also vanishes. Hence, from~\eqref{eqn:div-vqneq2} we find 
that $\bar{\rho}$ satisfies the following integral equation,
\begin{equation}
\label{eqn:int-eq}
\bar{\rho}(x) - (n+q-2)\int_{\mathbb{R}^{n}}|x-y|^{q-2}\bar{\rho}(y)dy =0,
\qquad\text{ for } x \in B(0,R),
\end{equation}
and vanishes outside $B(0,R)$.

Consider  the operator $T_R$ given by
\begin{equation}
\label{eqn:TR}
T_R \bar{\rho} (x) = (n+q-2)\int_{B(0,R)}|x-y|^{q-2}\bar{\rho}(y)dy.
\end{equation}
The subscript is used to emphasize the dependence of the integral operator on the radius $R$. 
As $q-2 > -n$, the kernel $|x-y|^{q-2}$ is integrable and $T_R$ defines a linear bounded operator from 
$C(B(0,R), \mathbb{R})$ to itself. Equation~\eqref{eqn:int-eq} can be cast as an eigenvalue problem,
\begin{equation*}
T_R \bar{\rho} = \bar{\rho}, \qquad  \bar{\rho} \in C(B(0,R), \mathbb{R}),
\end{equation*}
where solutions $\bar{\rho}$ are eigenfunctions corresponding to eigenvalue $1$.

The existence and uniqueness of a steady state supported in $B(0,R)$ is provided by the following theorem.

\begin{thm}
\label{thm:PF} For every $q>2-n$ and  $M>0$, there exists a unique radius $R$ (that
depends on $q$ and $n$ only) and a unique steady state
$\bar{\rho}$ of the aggregation model \eqref{eq:aggre}-\eqref{eq:kernel} that is supported on $B(0,R)$, has 
mass $M$ and is continuous on its support.
\end{thm}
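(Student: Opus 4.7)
The plan is to mimic the Krein--Rutman argument used in \cite{FeHuKo11} for the case $q\geq 2$, and then reduce the problem of finding the correct radius $R$ to a one-parameter eigenvalue problem that is resolved by a scaling identity.

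First I would recast the integral equation \eqref{eqn:int-eq} as an eigenvalue problem on a fixed domain. Since $q-2>-n$, the kernel $|x-y|^{q-2}$ is weakly singular and integrable on bounded sets, so $T_R$ in \eqref{eqn:TR} is a compact, linear, positive operator on $C(\overline{B(0,R)},\mathbb{R})$ (compactness follows from a standard Arzel\`a--Ascoli argument applied to the weakly singular integral operator; positivity follows because the kernel is strictly positive pointwise). Moreover, $T_R$ is strongly positive: if $\phi\geq 0$ is not identically zero, then $T_R\phi>0$ everywhere on $\overline{B(0,R)}$. The Krein--Rutman theorem then gives a simple principal eigenvalue $\lambda(R)>0$ with a one-dimensional eigenspace spanned by a strictly positive continuous eigenfunction $\phi_R$, and moreover $\lambda(R)$ is the unique eigenvalue of $T_R$ admitting a nonnegative eigenfunction.

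Next I would reduce the existence of a steady state of mass $M$ to finding the radius $R$ for which $\lambda(R)=1$. The crucial observation is a scaling identity: the rescaling $x=R\tilde{x}$, $y=R\tilde{y}$ gives
\begin{equation*}
T_R\phi(R\tilde{x}) = R^{n+q-2}\, T_1\tilde{\phi}(\tilde{x}),\qquad \tilde{\phi}(\tilde{y}):=\phi(R\tilde{y}),
\end{equation*}
so that $\lambda(R) = R^{n+q-2}\lambda(1)$. Since $q>2-n$, the exponent $n+q-2$ is strictly positive, so $R\mapsto \lambda(R)$ is a strictly increasing bijection of $(0,\infty)$ onto itself. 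Hence there is a unique $R=R(q,n)=\lambda(1)^{-1/(n+q-2)}$ with $\lambda(R)=1$, depending only on $q$ and $n$. The principal eigenfunction $\phi_R$ at this particular $R$ is then positive, continuous on $\overline{B(0,R)}$, and satisfies $T_R\phi_R=\phi_R$, i.e., the integral equation \eqref{eqn:int-eq}. Normalising $\bar{\rho}:=M\phi_R/\int_{B(0,R)}\phi_R\,dx$ produces a continuous, positive equilibrium on $\overline{B(0,R)}$ with total mass $M$, and extending by zero gives the desired solution on $\mathbb{R}^n$.

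For uniqueness, suppose another nonnegative equilibrium $\tilde{\rho}$ of mass $M$ is supported on some ball $B(0,R')$. Since $v\equiv 0$ on the support forces \eqref{eqn:int-eq} with $R$ replaced by $R'$, $\tilde{\rho}$ is a nonnegative eigenfunction of $T_{R'}$ with eigenvalue $1$. By the Krein--Rutman dichotomy above, this forces $\lambda(R')=1$, and the scaling identity then forces $R'=R$; simplicity of the principal eigenvalue plus the mass constraint forces $\tilde{\rho}=\bar{\rho}$. The main technical hurdle is verifying the two hypotheses underlying Krein--Rutman — compactness and strong positivity of $T_R$ on $C(\overline{B(0,R)})$ — in the regime $2-n<q<2$ where the kernel is genuinely singular; once these are in place (using the local integrability of $|x-y|^{q-2}$ near the diagonal), the remainder of the argument is essentially the clean scaling computation described above.
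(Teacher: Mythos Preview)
Your proposal is correct and follows essentially the same route as the paper: apply Krein--Rutman to the integral operator on $C(\overline{B(0,1)})$ to obtain a simple positive principal eigenvalue $\lambda(1)$ and positive eigenfunction, then use the scaling identity $\lambda(R)=R^{n+q-2}\lambda(1)$ (with $n+q-2>0$) to pin down $R=\lambda(1)^{-1/(n+q-2)}$ uniquely, and normalise by mass. The only cosmetic difference is that the paper verifies compactness of $T_1$ in the singular range $2-n<q<2$ by approximating the kernel with a cutoff $h(p|x-y|)|x-y|^{q-2}$ and showing $\|T_{1p}-T_1\|\to 0$, rather than invoking Arzel\`a--Ascoli directly; either route is standard for weakly singular kernels, and you have correctly flagged this as the one technical point requiring care.
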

\begin{proof}
 We use a scaling argument and consider the case $R=1$ first. For $q>2-n$,  the kernel $|x-y|^{q-2}$ is 
integrable and the linear operator $T_1: \left(C(B(0,1), \mathbb{R}),\| \cdot \|_{L^\infty} \right) \to \left( C(B(0,1), \mathbb{R}), \| \cdot \|_{L^\infty} \right)$ is bounded.  The operator  is also compact. This is a textbook exercise in analysis~\cite{Vainikko1993}, but we include it here for 
completeness. Case $q \geq 2$ presents no difficulty, as the kernel $|x-y|^{q-2}$ is continuous. For $2-n<q<2$, 
we write $T_1$ as a limit of compact operators as follows. Consider a smooth cut-off function $h :[0,\infty) \to \mathbb{R}$, 
such as $h(r)=0$ for $0\leq r \leq 1/2$, $h(r) = 1$ for $r \geq 1$ and $0 \leq h(r) \leq 1$ for all $r \geq 0$. Define
 \[
 K_p(x,y) = h(p |x-y|) |x-y|^{q-2},
 \]
 and 
 \[
 T_{1p}\bar{\rho} (x) = (n+q-2)  \int_{B(0,1)} K_p (x,y) \bar{\rho}(y)dy.
 \]
 The kernels $K_p$ are continuous, hence the operators $T_{1p}$ are compact. Now estimate
 \begin{align*}
 |T_{1p}(x)-T_1(x)| &\leq  (n+q-2) \int_{B(0,1)} |x-y|^{q-2} | 1 - h(p |x-y|)| \bar{\rho}(y) dy \\
 & \leq (n+q-2) \| \bar{\rho} \|_{L^\infty} \int_{|x-y|<1/p} |x-y|^{q-2} dy \\
 & \leq n \omega_n  \| \bar{\rho} \|_{L^\infty} \frac{1}{p^{n+q-2}}.
 \end{align*}
 As $n+q-2>0$, $\frac{1}{p^{n+q-2}} \to 0$, when $p \to \infty$. The convergence is uniform in $x$, hence,
 \[
 \| T_{1p}-T_1\| \to 0, \qquad p \to \infty,
 \]
 and $T_1$ is compact as a limit (in the operator norm) of compact operators.
 
 We apply the Krein-Rutman theorem~\cite{Du2006} to operator $T_1$ in the following setup. Take the cone 
in $C(B(0,1),\mathbb{R})$ consisting of all non-negative functions. $T_1$ is a linear, strongly positive, 
compact operator that maps the space of continuous functions $C(B(0,1),\mathbb{R})$ into itself. By 
Krein-Rutman theorem (see Theorem 1.2 in~\cite{Du2006}), there exists a  positive eigenfunction 
$\bar{\rho}_1$ such that $T_1\bar{\rho}_1=\lambda\bar{\rho}_1$, where
$\lambda$ (which depends only on $q$ and $n$) is the spectral radius of $T_1$.  Moreover, the 
eigenvalue $\lambda$  is simple and there is no other eigenvalue with a positive eigenvector.
By making the change of variable
\begin{equation}
\label{eqn:rho-scale}\bar{\rho}(x)= \bar{\rho}_{1}(x/R)
\end{equation}
in~\eqref{eqn:TR}, we get
\[
T_{R}\bar{\rho}(x)=R^{n+q-2}\lambda\, \bar{\rho}(x).
\]
Now ask that $\bar{\rho}$ is an eigenfunction of $T_{R}$ corresponding to
eigenvalue one and find
\begin{equation}
\label{eqn:R-gen}R=\lambda^{-\frac{1}{n+q-2}},
\end{equation}
which gives the radius of the support as a function of $q$ and $n$ only. Once a mass $M$ for $\bar{\rho}$ 
is set, uniqueness can be inferred from the uniqueness properties of the spectral radius of $T_1$ and its 
associated eigenfunction $\bar{\rho}_1$.
\end{proof}


\subsection{Qualitative properties of equilibria}
\label{subsect:monotonicity}

\paragraph{Monotonicity and radial symmetry of equilibria.}
We prove that the equilibria given by positive solutions of~\eqref{eqn:int-eq}  (and whose existence and 
uniqueness was established in Theorem~\ref{thm:PF})  are radially symmetric and monotone in the radial 
coordinate. More specifically, equilibria are monotonically decreasing when $2-n<q<2$ and increasing 
for $q>2$. To prove this result we employ the {\em method of moving planes}, a technique introduced by 
the Soviet mathematician Alexandroff in the early 1950's, which became well-known after 
Gidas, Ni and Nirenberg~\cite{GidasNiNirenberg} applied it to study qualitative properties of positive 
solutions of elliptic equations. We refer to~\cite{Du2006} for a detailed description of the method and its applications.

Our use of the moving plane technique is inspired by a novel application of the method in the context of integral 
equations~\cite{ChenLiOu}. Consider a steady state $\bar{\rho}$ supported on the ball $B(0,R)$; $\bar{\rho}$ satisfies 
the integral equation~\eqref{eqn:int-eq} in $B(0,R)$ and vanishes outside $B(0,R)$. For convenience of calculations, denote 
\[
\alpha = n+q-2.
\]
As $q>2-n$, we have $\alpha>0$. Two cases will be distinguished from the subsequent analysis:  
(i) $2-n<q<2$ (equivalently, $0<\alpha<n$) with radial equilibria which decrease about the origin, 
and (ii) $q>2$ (or $\alpha>n$) with increasing solutions. Using the new notation, $\bar{\rho}$ satisfies
\begin{equation}
\label{eq:rhobar}
\bar{\rho}(x)=\begin{cases}
\alpha \int _{B(0,R)} \frac{1}{|x-y|^{n-\alpha}} \bar{\rho}(y) \, dy  & x \in B(0,R)\\
0 & x \notin B(0,R). 
\end{cases}
\end{equation}

Take $\mu \in \mathbb{R}$ such that $-R<\mu<0$, and consider the reflection across the 
plane $x_1= \mu$, $x \mapsto x^\mu = (2 \mu - x_1, x_2, \dots, x_n)$. In particular, we have 
the image $0^\mu$ of the origin $0$ under this map. Define 
\[
\bar{\rho}_\mu(x) = \bar{\rho}(x^\mu).
\]
Using~\eqref{eq:rhobar}, $\bar{\rho}_\mu$ is given by
\begin{equation}
\label{eq:rho-lambda}
\bar{\rho}_\mu(x)=\begin{cases}
\alpha \int _{B(0,R)} \frac{1}{|x^\mu-y|^{n-\alpha}} \bar{\rho}(y) \, dy & x \in B(0^\mu,R)\\
0 & x \notin B(0^\mu,R). 
\end{cases}
\end{equation}
Define
\[
\Sigma_\mu = \{ x \in B(0,R) \mid x_1 \geq \mu \}.
\]
We apply the method of moving planes and compare $\bar{\rho}(x)$ and $\bar{\rho}_\mu(x)$ for $x \in \Sigma_\mu$. 
In case (i),  $2-n<q<2$, we show that there is a $\mu$, $-R<\mu<0$, such that $\bar{\rho}(x) \geq \bar{\rho}_\mu(x)$, 
for all $x \in \Sigma_\mu$. By a  continuity argument, we  show that the plane $x_1=\mu$ can be moved continuously all 
the way to $x_1=0$, and hence, $\bar{\rho}(x)$ increases as $x$ approaches the origin from $x_1<0$. A similar argument can be 
made using planes $x_1=\mu>0$. Since the direction $x_1$ can be chosen arbitrarily we conclude that $\bar{\rho}$ is radially 
symmetric and decreasing about the origin. For case (ii), $q>2$, a similar argument leads to  $\bar{\rho}$ being radial and 
increasing about the origin.

We now state and prove the result regarding the monotonicity of equilibria of~\eqref{eq:aggre}-\eqref{eq:kernel}.
\begin{thm}
\label{th:monotone}
Consider a bounded steady state $\bar{\rho}(x)$ of the aggregation model~\eqref{eq:aggre}-\eqref{eq:kernel} that 
is supported in a ball $B(0,R)$ of $\mathbb{R}^n$. Then, $\bar{\rho}$ is radially symmetric and monotone about the origin. 
More specifically, we distinguish two cases: (i) $2-n<q<2$, when $\bar{\rho}$ is decreasing about the origin, and 
(ii) $q>2$, when $\bar{\rho}$ is increasing. 
\end{thm}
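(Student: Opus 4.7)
The plan is to apply an integral-equation version of the method of moving planes, in the spirit of Chen, Li and Ou \cite{ChenLiOu}, adapted to the compact-support setting of \eqref{eq:rhobar}. Fix a direction, say $x_1$, and for $\mu \in (-R, 0)$ set $\bar\rho_\mu(x) := \bar\rho(x^\mu)$; a change of variables $y \mapsto y^\mu$ shows that $\bar\rho_\mu$ satisfies an integral equation of the same form as \eqref{eq:rhobar} on its support $B(0^\mu, R)$. Subtracting the two equations for $x$ in the overlap $\Sigma_\mu' := \Sigma_\mu \cap B(0^\mu, R)$, splitting the resulting $\mathbb{R}^n$ integral along $\{y_1 = \mu\}$, and folding the half-space $\{y_1 < \mu\}$ into $A_\mu := \{y_1 \geq \mu\}$ via $y \mapsto y^\mu$ produces the fundamental identity
\begin{equation*}
\bar\rho(x) - \bar\rho_\mu(x) = \alpha \int_{A_\mu} K_\mu(x, y)\bigl(\bar\rho(y) - \bar\rho_\mu(y)\bigr)\, dy, \qquad K_\mu(x, y) := \frac{1}{|x - y|^{n - \alpha}} - \frac{1}{|x^\mu - y|^{n - \alpha}},
\end{equation*}
valid for $x \in \Sigma_\mu'$. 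A short computation comparing $(x_1 - y_1)^2$ with $(2\mu - x_1 - y_1)^2$ shows $|x - y| \leq |x^\mu - y|$ whenever $x_1, y_1 \geq \mu$, so $K_\mu \geq 0$ in case (i) ($n - \alpha > 0$) and $K_\mu \leq 0$ in case (ii).

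Focus on case (i); case (ii) follows by analogous sign modifications. Writing $w_\mu := \bar\rho_\mu - \bar\rho$ (both extended by zero outside their respective supports), define the bad set $\Sigma_\mu^- := \{x \in \Sigma_\mu : w_\mu(x) > 0\}$; since $\bar\rho > 0$ on $\Sigma_\mu$ forces $\bar\rho_\mu(x) > 0$ on $\Sigma_\mu^-$, we have $\Sigma_\mu^- \subset \Sigma_\mu'$, where the identity holds. A useful geometric observation is that for $\mu < 0$, $y \in A_\mu \cap B(0^\mu, R)$ implies $|y| \leq |y^\mu| < R$; hence $A_\mu \cap B(0^\mu, R) = \Sigma_\mu'$ and $w_\mu \equiv 0$ on $A_\mu \setminus \Sigma_\mu$. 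Discarding the nonpositive contributions from $\Sigma_\mu \setminus \Sigma_\mu^-$ (where $K_\mu \geq 0$ multiplies $w_\mu \leq 0$) then yields the self-referential inequality
\begin{equation*}
0 < w_\mu(x) \leq \alpha \int_{\Sigma_\mu^-} \frac{w_\mu(y)}{|x - y|^{n - \alpha}}\, dy, \qquad x \in \Sigma_\mu^-.
\end{equation*}
Applying Hardy--Littlewood--Sobolev (valid because $0 < n - \alpha < n$) together with H\"older's inequality produces $\|w_\mu\|_{L^p(\Sigma_\mu^-)} \leq C |\Sigma_\mu^-|^{\alpha/n} \|w_\mu\|_{L^p(\Sigma_\mu^-)}$ for a suitable exponent $p$. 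Because $B(0, R) \cap B(0^\mu, R)$ collapses as $\mu \searrow -R$, for $\mu$ sufficiently close to $-R$ the prefactor is less than $1$, forcing $|\Sigma_\mu^-| = 0$ and hence $\bar\rho \geq \bar\rho_\mu$ throughout $\Sigma_\mu$.

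With the starting inequality in hand, I slide the plane upward: set $\mu_0 := \sup\bigl\{\mu \in (-R, 0] : \bar\rho \geq \bar\rho_\nu \text{ on } \Sigma_\nu \text{ for all } \nu \leq \mu\bigr\}$. Continuity propagates the inequality to $\mu_0$. If $\mu_0 < 0$, an integral-equation strong-maximum-principle dichotomy, obtained by reinspecting the identity and exploiting strict positivity of $K_{\mu_0}$ on the interior of $A_{\mu_0}$ together with the Krein--Rutman positivity of $\bar\rho$ inside $B(0, R)$ (Theorem \ref{thm:PF}), forces either $\bar\rho > \bar\rho_{\mu_0}$ strictly on $\Sigma_{\mu_0}'$, or $\bar\rho \equiv \bar\rho_{\mu_0}$. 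The degenerate alternative would make $\bar\rho$ symmetric about the plane $\{x_1 = \mu_0\}$ with $\mu_0 \neq 0$, contradicting the fact that both the support $B(0, R)$ and the conserved centre of mass \eqref{eqn:c-cmass} sit at the origin. Strict inequality at $\mu_0$ then permits rerunning the HLS bootstrap at $\mu_0 + \varepsilon$ for small $\varepsilon > 0$, contradicting maximality; hence $\mu_0 = 0$. A symmetric argument with planes $\mu \in (0, R)$ sliding down to $0$ yields the reverse inequality, so $\bar\rho(x) = \bar\rho(x^0)$ on $\{x_1 \geq 0\}$; since the direction $x_1$ was arbitrary, $\bar\rho$ is radial. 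Monotonicity is an immediate consequence of $\bar\rho(x) \geq \bar\rho(x^\mu)$ combined with $|x| \leq |x^\mu|$ for every $\mu \in (-R, 0)$ and $x \in \Sigma_\mu'$; case (ii) produces the increasing profile after swapping signs throughout. I expect the main obstacle to be justifying the strict-versus-identity dichotomy at $\mu_0$: classical strong maximum principles for differential operators do not apply, so the degenerate alternative must be ruled out intrinsically for the nonlocal operator, combining strict positivity of $K_{\mu_0}$ on the interior of $A_{\mu_0}$ with the centre-of-mass constraint \eqref{eqn:c-cmass}.
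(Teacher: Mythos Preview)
Your proposal is correct and follows the same moving-plane strategy as the paper's proof: you derive the same fundamental identity (the paper's equation \eqref{eqn:diff}), start the plane near $\mu=-R$, and slide it to the origin via a smallness argument on the ``bad'' set $\Sigma_\mu^-$. Two minor differences are worth pointing out.

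First, where you invoke Hardy--Littlewood--Sobolev and $L^p$ norms (as in Chen--Li--Ou's original unbounded setting), the paper simply uses the $L^\infty$ estimate
\[
\|\bar\rho_\mu-\bar\rho\|_{L^\infty(\Sigma_\mu^-)} \;\le\; \alpha\,\|\bar\rho_\mu-\bar\rho\|_{L^\infty(\Sigma_\mu^-)}\,\sup_{x\in\Sigma_\mu^-}\int_{\Sigma_\mu^-}\Bigl(\tfrac{1}{|x-y|^{n-\alpha}}-\tfrac{1}{|x^\mu-y|^{n-\alpha}}\Bigr)\,dy,
\]
which is available because $\bar\rho$ is bounded and $|x|^{-(n-\alpha)}$ is locally integrable. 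This is lighter machinery and makes the smallness step transparent: the integral tends to zero with $|\Sigma_\mu^-|$, no interpolation needed.

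Second, the obstacle you flag --- ruling out the degenerate branch $\bar\rho\equiv\bar\rho_{\mu_0}$ of the dichotomy via the centre-of-mass constraint --- is not actually needed in this compact-support setting. For any $\mu_0<0$ the set $\Sigma_{\mu_0}\setminus B(0^{\mu_0},R)$ has positive measure, and on it $\bar\rho>0=\bar\rho_{\mu_0}$. Feeding this strict positivity back into the identity (with $K_{\mu_0}>0$ in the interior) gives $\bar\rho(x)>\bar\rho_{\mu_0}(x)$ for every interior $x\in\Sigma_{\mu_0}$ directly, so the degenerate alternative never arises. The paper exploits exactly this support mismatch, which is why it never mentions a strong maximum principle or \eqref{eqn:c-cmass}.
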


\begin{proof}
We use the notations and symbols introduced in the preamble of the theorem. Calculate $\bar{\rho}(x) - \bar{\rho}_\mu(x)$, 
for $x \in \Sigma_\mu$. As $\bar{\rho}_\mu(x) = 0$ outside $B(0^\mu,R)$, we have
\[
\bar{\rho}(x) - \bar{\rho}_\mu(x) = \bar{\rho}(x) >0 , \qquad \text{ for } x \in \Sigma_\mu \setminus B(0^\mu,R).
\]
It remains to consider the case  $x \in  \Sigma_\mu \cap B(0^\mu,R)$. Denote by $\Sigma_\mu^C$ the complement 
of $\Sigma_\mu$ in $B(0,R)$, i.e., 
\[
\Sigma_\mu^C = B(0,R) \setminus \Sigma_\mu,
\]
and calculate, using \eqref{eq:rhobar},
\begin{align*}
\bar{\rho}(x) &= \alpha \int_{\Sigma_\mu}  \frac{1}{|x-y|^{n-\alpha}} \bar{\rho}(y) \, dy + \alpha 
\int_{\Sigma_\mu^C}  \frac{1}{|x-y|^{n-\alpha}} \bar{\rho}(y) \, dy \\
&= \alpha \int_{\Sigma_\mu}  \frac{1}{|x-y|^{n-\alpha}} \bar{\rho}(y) \, dy + \alpha \int_{\Sigma_\mu \cap 
B(0^\mu,R)}  \frac{1}{|x^\mu-y|^{n-\alpha}} \bar{\rho}_{\mu}(y) \, dy.
\end{align*}
In the above calculation we used $|x^\mu-y^\mu| = |x-y|$ to write the second integral in the right-hand-side 
as an integral with respect to $y^\mu$ (subsequently relabelled $y$).

Similarly, using \eqref{eq:rho-lambda},
\begin{align*}
\bar{\rho}_\mu(x) 
&= \alpha \int_{\Sigma_\mu}  \frac{1}{|x^\mu-y|^{n-\alpha}} \bar{\rho}(y) \, dy + \alpha \int_{\Sigma_\mu 
\cap B(0^\mu,R)}  \frac{1}{|x-y|^{n-\alpha}} \bar{\rho}_{\mu}(y) \, dy.
\end{align*}
As $\bar{\rho}_\mu$ is zero outside $B(0^\mu,R)$, the second integrals in the right-hand-sides of the 
expressions for $\bar{\rho}$ and $\bar{\rho}_\mu$ above can be extended to $\Sigma_\mu$. Hence, we compute, 
for $x \in \Sigma_\mu \cap B(0^\mu,R)$,
\begin{equation}
\label{eqn:diff}
\bar{\rho}(x) - \bar{\rho}_\mu(x) = \alpha \int_{\Sigma_\mu} \left( \frac{1}{|x-y|^{n-\alpha}}  - 
\frac{1}{|x^\mu-y|^{n-\alpha}} \right) (\bar{\rho}(y) - \bar{\rho}_\mu(y) ) \, dy.
\end{equation}

{\em Case (i)} $2-n<q<2$ (or $0<\alpha<n$): For $x \in \Sigma_\mu \cap B(0^\mu,R)$ and $y \in \Sigma_\mu$, 
$|x-y| < |x^\mu - y|$. Hence, 
\begin{equation}
\label{eqn:ineq}
\frac{1}{|x-y|^{n-\alpha}}  - \frac{1}{|x^\mu-y|^{n-\alpha}}  \geq 0, \qquad \text{ for all } x \in \Sigma_\mu 
\cap B(0^\mu,R) \text{ and } y \in \Sigma_\mu.
\end{equation}
Let us first assume that there exists $\mu_0 \in (-R, 0)$ such that 
\begin{equation}
\label{eqn:lambda0}
\bar{\rho}(x) \geq \bar{\rho}_{\mu_0}(x), \qquad  \text{ for all } x \in \Sigma_{\mu_0},
\end{equation}
and prove that $\mu_0$ can be extended all the way to the origin $0$. Suppose by contradiction that $\mu_0$ cannot 
be extended. Take $\mu \in (\mu_0, \mu_0+\epsilon)$, $\epsilon>0$, and define
\[
\Sigma_{\mu}^- = \{ x \in \Sigma_{\mu} \mid \bar{\rho}(x) <  \bar{\rho}_\mu(x)\}.
\]
For $x \in \Sigma_{\mu}^-$, using~\eqref{eqn:diff} and \eqref{eqn:ineq} we find
\[
0< \bar{\rho}_\mu(x) - \bar{\rho}(x) \leq \alpha \int_{\Sigma_{\mu}^-}  \left( \frac{1}{|x-y|^{n-\alpha}}  
- \frac{1}{|x^\mu-y|^{n-\alpha}} \right) (\bar{\rho}_\mu(y) - \bar{\rho}(y) ) \, dy,
\]
and hence,
\begin{equation}
\label{eqn:est-Linf}
\|  \bar{\rho}_\mu - \bar{\rho} \|_{{L^\infty}(\Sigma_{\mu}^-)} \leq \alpha \|  \bar{\rho}_\mu 
- \bar{\rho} \|_{{L^\infty}(\Sigma_{\mu}^-)} \sup_{x \in \Sigma_{\mu}^- }\int_{\Sigma_{\mu}^-}  
\left( \frac{1}{|x-y|^{n-\alpha}}  - \frac{1}{|x^\mu-y|^{n-\alpha}} \right) \, dy,
\end{equation}
The function under the integral on the right-hand-side is integrable, as $\alpha>0$. Also, from~\eqref{eqn:diff}, 
we infer that $\bar{\rho}(x) > \bar{\rho}_\mu(x)$ in the interior of $\Sigma_{\mu_0}$, which implies that the
 closure $\overline{\Sigma_{\mu_0}^-}$ of $\Sigma_{\mu_0}^-$ has measure $0$. As $\lim_{\mu \to \mu_0} 
\Sigma_{\mu}^- \subset \overline{\Sigma_{\mu_0}^-}$, we conclude that the measure of $\Sigma_{\mu}^-$ approaches 
$0$ as $\mu \to  \mu_0$. Therefore, we can choose $\epsilon$ small enough such that
\[
\alpha \sup_{x \in \Sigma_{\mu}^- }\int_{\Sigma_{\mu}^-}  \left( \frac{1}{|x-y|^{n-\alpha}}  -
 \frac{1}{|x^\mu-y|^{n-\alpha}} \right) \, dy \leq \frac{1}{2}.
\]
By \eqref{eqn:est-Linf} we have $ \|  \bar{\rho}_\mu - \bar{\rho} \|_{{L^\infty}(\Sigma_{\mu}^-)} $ =0, 
which implies that $\Sigma_{\mu}^-$ is empty, hence $\mu_0$ is not maximal and we reached the desired contradiction.

It remains to show that $\mu_0 \in (-R,0)$ with the property~\eqref{eqn:lambda0} exists indeed. Note 
that~\eqref{eqn:lambda0} holds for $\mu_0 = -R$ (see~\eqref{eq:rhobar}  and~\eqref{eq:rho-lambda}). An argument 
entirely similar to the one made in the previous step proves that the plane  $\mu_0 = -R$ can be moved further to 
the right, while~\eqref{eqn:lambda0} still holds. This concludes the proof for case (i).
\medskip

{\em Case (ii)} $2<q$ (or $\alpha>n$) follows similarly, the single difference being the sign of the integrand 
in~\eqref{eqn:diff}. More precisely, instead of~\eqref{eqn:ineq}, the reversed inequality holds:
\begin{equation*}
\frac{1}{|x-y|^{n-\alpha}}  - \frac{1}{|x^\mu-y|^{n-\alpha}}  \leq 0, \qquad \text{ for all } x \in \Sigma_\mu 
\cap B(0^\mu,R) \text{ and } y \in \Sigma_\mu.
\end{equation*}
A similar argument as that made for case (i) shows that $\bar{\rho}$ decreases as $x$ approaches the origin 
from $x_1<0$. Hence conclude that in this case  $\bar{\rho}$ is radially symmetric and increasing about the origin.
\end{proof}

\smallskip
{\bf Remarks} 

a. Case $q=2$ corresponds to constant solutions in a ball. These solutions were studied in detail in~\cite{FeHuKo11}.

b. Theoretical findings are in perfect agreement with numerical results --- see Section~\ref{sect:asy-num}.

c. The convexity of the steady state can be inferred easily from~\eqref{eqn:int-eq} when $q>3$ (the 
function $|x|^\alpha$ is convex for $\alpha>1$).




\section{Numerical and asymptotic studies of radial equilibria}
\label{sect:asy-num}

We showed that a steady state $\bar{\rho}$ supported on a ball is necessarily radially symmetric and monotone. 
In this section we investigate further their properties using numerical and asymptotic methods.

\subsection{Numerical methods for the dynamic evolution and steady states}
\label{subsect:numerics}

In solving numerically  the steady states~\eqref{eqn:int-eq} or the dynamic evolution of solutions 
to~\eqref{eq:aggre}-\eqref{eq:kernel}, the computational bottleneck is the evaluation of the integral operators. The methods we 
use are similar to those in~\cite{HuBe2010,FeHuKo11} and are reviewed and extended below. 

\paragraph{Steady states.} The equilibria are computed from~\eqref{eqn:int-eq} by using the power 
method~\cite{EastmanEstep}, whose convergence is guaranteed by 
 Theorem~\ref{thm:PF}.  First,  write the operator $T_R$ given by~\eqref{eqn:TR} in radial coordinates:
\begin{equation}
\label{eqn:T_R}T_{R}\bar{\rho}(r)=\frac{n(n+q-2)\omega_{n}}{\int_{0}^{\pi}\sin^{n-2}\theta d\theta}
\int_{0}^{R} (r{^{\prime}})^{n-1}%
\bar{\rho}(r{^{\prime}})I(r,r{^{\prime}})dr{^{\prime}},
\end{equation}
where
\begin{equation}
\label{eqn:I}
I(r,r{^{\prime}})=\int_{0}^{\pi}(r^{2}+(r{^{\prime}})^{2}-2rr{^{\prime}}%
\cos\theta)^{q/2-1}\sin^{n-2}\theta d\theta.
\end{equation}
The steady states $\bar{\rho}$ are eigenfunctions of  $T_{R}$ corresponding to eigenvalue $1$. Here the eigenvalue
problem consists in determining the eigenfunction $\bar{\rho}$ and the radius
$R$ of the support. The actual steady density is a constant multiple of this
eigenfunction, where the constant is determined from the initial mass. 

By the scaling argument used to prove Theorem~\ref{thm:PF} , it is enough to find the spectral radius $\lambda$ of $T_1$, 
and its corresponding eigenfunction $\bar{\rho}_1$; the steady state $\bar{\rho}$ can then be calculated from~\eqref{eqn:R-gen} 
and~\eqref{eqn:rho-scale}.

Given an initial positive density $\bar{\rho}^{(0)}$ on $[0,1]$, consider the iterative scheme (power method):
\begin{equation}\label{eqn:powermethod}
\bar{\rho}^{(m+1)}= T_{1}\bar{\rho}^{(m)}/\|T_{1}\bar{\rho}^{(m)}\|,\quad \lambda^{(m)} = 
\|T_{1} \bar{\rho}^{(m)}\|/\|{\bar{\rho}}^{(m)}\|,
\end{equation}
where $\|\cdot\|$ can be any norm for functions on the unit interval $[0,1]$.
The sequence $\bar{\rho}^{(m)}$ converges to $\bar{\rho}_{1}$~\cite{EastmanEstep}, and the
spectral radius of $T_{1}$ is given by the limit of $\lambda^{(m)}$, as $m$ goes to infinity. 

The integral operator in~\eqref{eqn:T_R} is discretized by the trapezoidal rule. The computational complexity can be 
reduced to the calculation of the integration in $r'$ only, where $I(r,r')$ can be interpolated using the fact that
$I(r,r')=\max(r,r')^{q-2}I_1(s)$, where  
\begin{equation}
\label{eqn:I1}
 I_{1}(s)    = \int_{0}^{\pi}(1+s^{2}-2s\cos\theta)^{q/2-1}\sin
^{n-2}\theta d\theta,\qquad s=\min(r,r')/\max(r,r') \in [0,1],
\end{equation}
and $I_1$ is computed at sample points on $[0,1]$. 

The calculations do not present significant challenges, except for $q \in (2-n,3-n]$, when $I_1(1)$ is unbounded, making the error in the trapezoidal rule uncontrollable.
We managed to calculate this more singular case only in  dimensions one and three, where $I(r,r')$ 
can be obtained explicitly. Due to these computational difficulties, the singular limit  $q \to 2-n$ studied below by asymptotic methods is valid in  all dimensions, but its numerical verification is done only in one and three dimensions.

In three dimensions for instance, the kernel $I$ from \eqref{eqn:I} can be calculated explicitly: when $r,r'\neq 0$,
\begin{align}
I(r,r')
    =\begin{cases}
        \frac{(r+r')^{q}-|r-r'|^q}{qrr'},\qquad & \mbox{if}\ q \neq 0\cr
        \frac{1}{rr'}\ln\frac{r+r'}{|r-r'|},& \mbox{if}\ q=0,
    \end{cases}
    \label{eqn:explicitI}
\end{align} 
and when $r'=0$ or $r=0$,
\[
 I(r,0)= 2r^{q-2},\quad  I(0,r')=2r'^{q-2}.
\]
Using the explicit form~\eqref{eqn:explicitI}, when $q$ is in the singular range $(-1,0]$, 
and $r\approx r'$, the integrand $r'^2I(r,r')\rho(r')$ in the integral operator~\eqref{eqn:T_R} is 
weakly singular. However, the part associated with this weak singularity can be approximated as the product 
of the weakly singular function $|r-r'|^{-q}$ and a smooth function. When the latter is approximated by 
linear or higher order interpolation, the whole integral can be calculated explicitly\cite{Egger}, giving
an accurate approximation of~\eqref{eqn:T_R}.  


The steady states have been displayed in Figure~\ref{fig:variousq} in dimension three, for a wide range of values of $q$. 
The results are perfectly consistent with the monotonicity properties stated and proved in Theorem~\ref{th:monotone}. 
As noted from Figure~\ref{fig:variousq}, equilibria display an interesting asymptotic behaviour 
 as $q \to \infty$ and $q \searrow 2-n$. As $q \to \infty$, the radii of the equilibria approach a constant, 
and mass aggregates toward the edge of the swarm. As $q \searrow 2-n$ mass concentrates at the origin, as 
attraction becomes as strong as the Newtonian repulsion. We perform careful asymptotic studies in Section~\ref{subsect:asympt}  and  confirm and detail these observations.


\paragraph{Dynamic evolution to equilibria.}  
All numerical simulations we performed suggest that equilibria that solve \eqref{eqn:int-eq} are global attractors for the aggregation model  \eqref{eq:aggre}-\eqref{eq:kernel}. 
To evolve dynamically the solutions to~\eqref{eq:aggre}-\eqref{eq:kernel}, we consider the model in characteristic 
form  (see~\eqref{eqn:charODE}, \eqref{eqn:F-qgen}, \eqref{eqn:rho-evol}). In radial coordinates the characteristic 
equations read
\begin{subequations}
\label{eq:charr}%
\begin{align}
\frac{dr}{dt}  &  = \frac{1}{r^{n-1}}\int_{0}^{r} (r{^{\prime}})^{n-1}\rho(r{^{\prime}%
})dr{^{\prime}}
\nonumber\\
&  \quad-\omega_{n-1}\int_{0}^{\infty}(r{^{\prime}})^{n-1}\rho(r{^{\prime}})\int_{0}^{\pi
}(r-r{^{\prime}}\cos\theta)(r^{2}+r{^{\prime}}^{2}-2rr{^{\prime}}\cos
\theta)^{q/2-1}\sin^{n-2} \theta d\theta dr{^{\prime}}\label{eq:drdt}\\
\frac{d\rho}{dt}  &  =-\rho\left[\rho-  (n+q-2)\omega_{n-1}\int_{0}^{\infty}%
(r{^{\prime}})^{n-1}\rho(r{^{\prime}})\int_{0}^{\pi}(r^{2}+r{^{\prime}}^{2}-2rr{^{\prime}}%
\cos\theta)^{q/2-1}\sin^{n-2}\theta d\theta dr{^{\prime}}\right]  .
\label{eq:drhodt}%
\end{align}
\end{subequations}
The term associated with the singular repulsion (the Newtonian potential $\phi$)  
in the right-hand-side of~\eqref{eq:drdt} is  calculated by taking advantage of the 
fact that the corresponding kernel is the fundamental solution of the Laplace equation. 


Similar to the discretization of the integral from \eqref{eqn:T_R}, the computational complexity in calculating the integrals from~\eqref{eq:drdt} and~\eqref{eq:drhodt} can be reduced by introducing the following auxiliary functions: $I_1$ (defined by \eqref{eqn:I1}),
\begin{subequations}%
\begin{align*}
I_{2}(s)  &  = \int_{0}^{\pi}(1-s\cos\theta)(1+s^{2}-2s\cos\theta
)^{q/2-1}\sin^{n-2}\theta d\theta,\\
I_{3}(s)  &  = \int_{0}^{\pi}(s-\cos\theta)(1+s^{2}-2s\cos\theta
)^{q/2-1}\sin^{n-2}\theta d\theta.
\end{align*}
\end{subequations}
Thus, the angular integrals in $\theta$ in~\eqref{eq:drdt} and~\eqref{eq:drhodt} become products of powers of $r$, $r{^{\prime}}$, and these auxiliary
functions, with $s=\min(r,r{^{\prime}})/\max(r,r{^{\prime}})$. Hence the double
integrals in \eqref{eq:drdt} and~\eqref{eq:drhodt}  become single integrals in $r^{\prime}$ and are
evaluated by trapezoidal rule.  Due to the extra factor $1-\cos\theta$ in the integrand, $I_{2}(1)$ and $I_{3}(1)$ are bounded for any $q>2-n$, and can always be used in the trapezoidal rule. 
This observation reduces the total complexity
in the computations  to $O(N^{2})$ per time step, where $N$ is the number
of spatial gridpoints in the radial coordinate $r$. Once the characteristic speeds in~\eqref{eq:charr}
are found, the equations are evolved in time by the classical fourth order
Runge-Kutta method.




Figures~\ref{fig:dynamic}(a) and~\ref{fig:dynamic}(b) show simulation results in three
dimensions, corresponding to $q=1.5$ and $q=20$, respectively. We plot the
solution against the radial coordinate $r$. The initial data used in Figure~\ref{fig:dynamic} is
\begin{equation}\label{eq:initrho}
\rho(x,0) = (0.2-20|x|^2+1000|x|^4)\exp(-40|x|^2)/c,
\end{equation}
where $c$ is a constant chosen to normalize the mass to one. The solutions approach as $t \to \infty$ the 
steady states studied in Section~\ref{sect:sstates} and shown in Figure~\ref{fig:variousq}. We note that for large $q$, the convergence near the origin tends to be slow. Based on numerical 
observations we conjecture that these equilibria  are {\em global attractors} for solutions 
to~\eqref{eq:aggre}-\eqref{eq:kernel}. In future work we plan to validate rigorously these observations 
regarding the global stability of the steady states.

\begin{figure}[htb]
 \begin{center}
\includegraphics[width=0.46\textwidth]{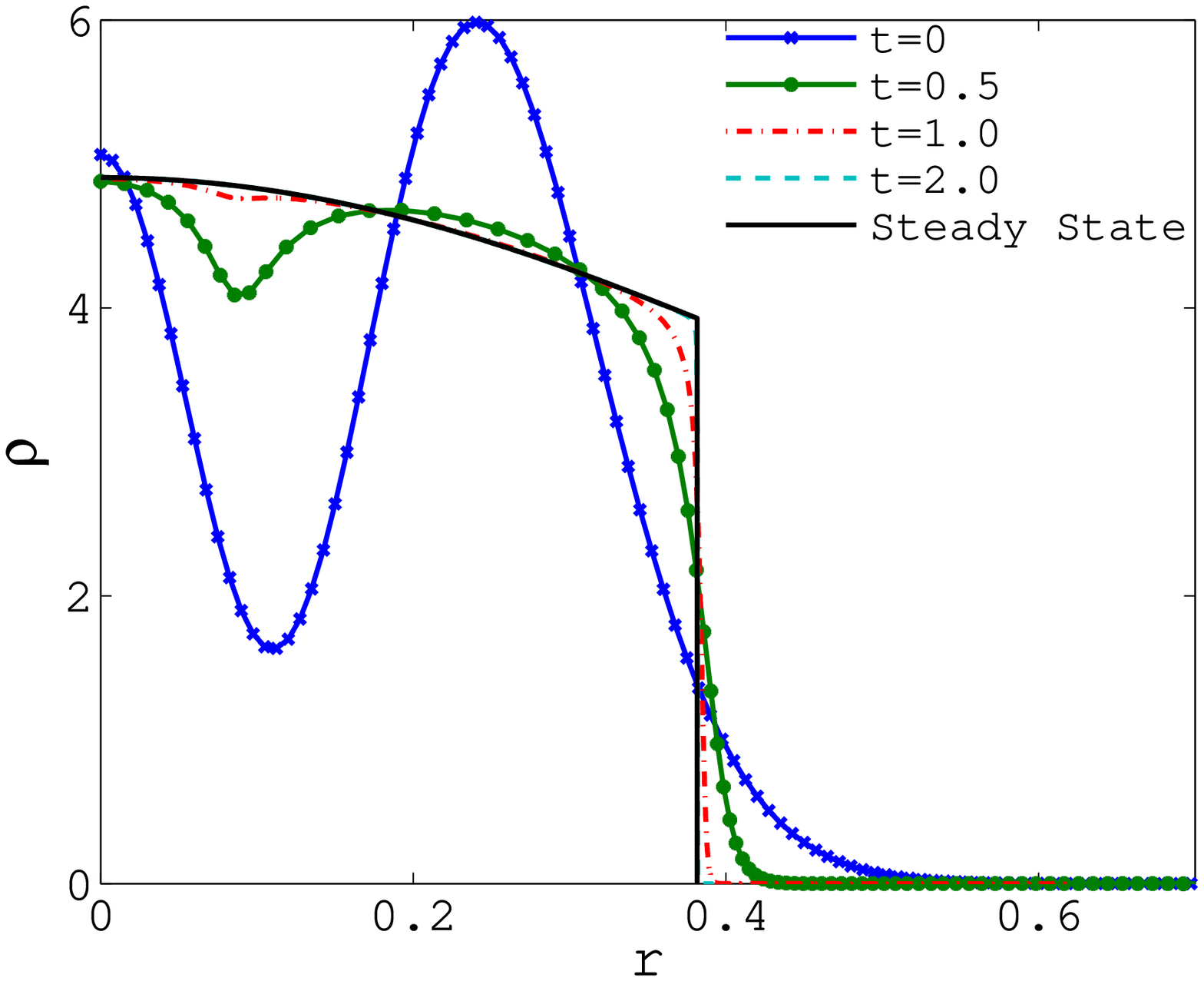} $~~~~$
\includegraphics[width=0.46\textwidth]{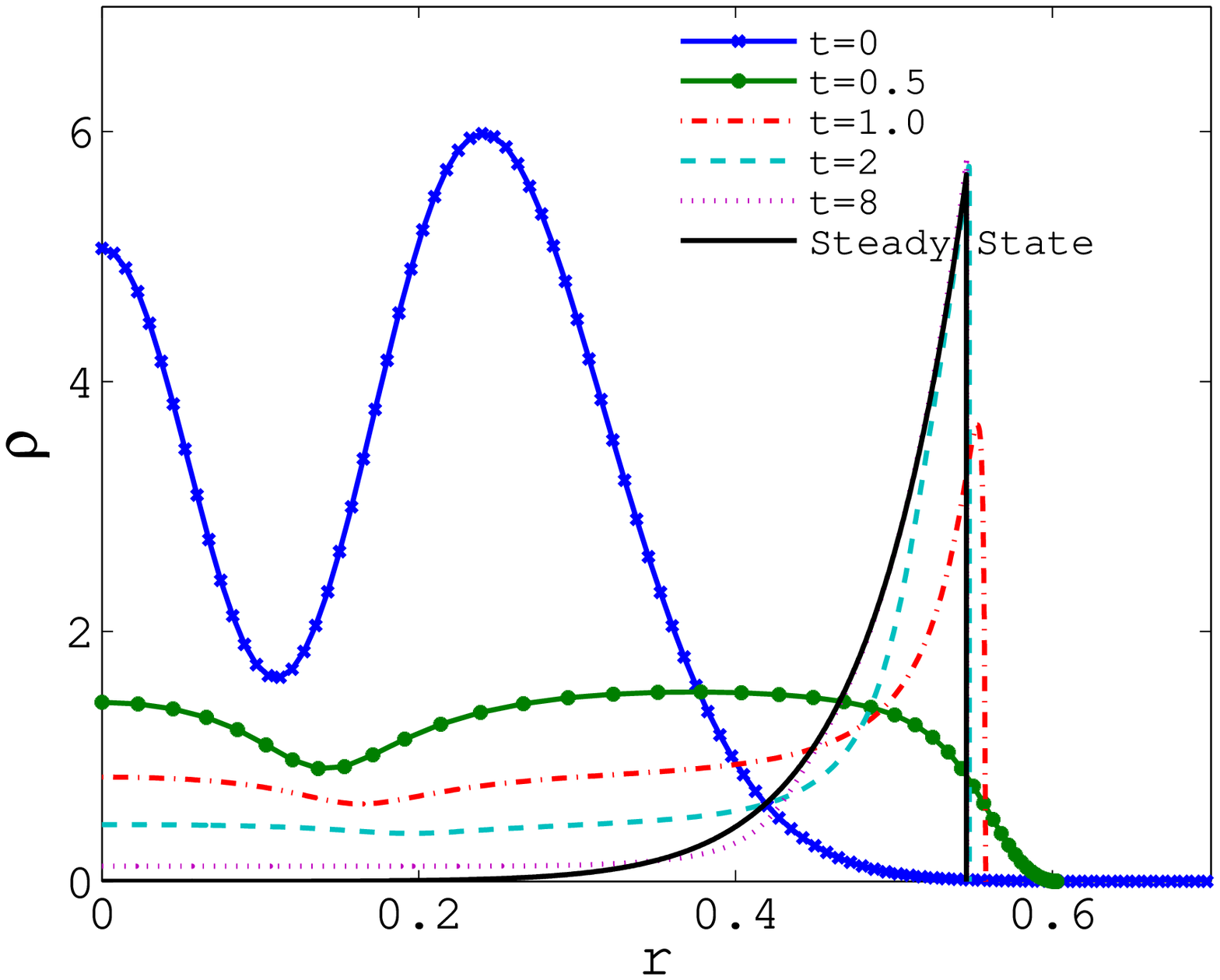}
 \end{center}
\[
\hspace{0.7cm} \textbf{(a)} \hspace{7.8cm} \textbf{(b)} 
\]
\caption{Time evolution of a radially symmetric solution to the aggregation 
model~\eqref{eq:aggre}-\eqref{eq:kernel} with (a) $q=1.5$ and (b) $q=20$, in three dimensions,
starting from initial data~\eqref{eq:initrho}. The solutions approach asymptotically the steady states 
studied in Section~\ref{sect:sstates} (solutions to~\eqref{eqn:int-eq}) and shown in 
Figure~\ref{fig:variousq}. Numerics with a variety of other initial conditions suggests that these
 equilibria are global attractors for the dynamics of~\eqref{eq:aggre}-\eqref{eq:kernel}.}
\label{fig:dynamic}
\end{figure}


\subsection{Asymptotic behaviour of equilibria as $q \to \infty$ and $q \searrow 2-n$} 
\label{subsect:asympt}

In general, there is no explicit formula for the radius of the support $R$ and the corresponding 
steady states $\bar{\rho}$ on $B(0,R)$, governed by~\eqref{eqn:int-eq}. However, when $q$ 
is large or close to the singular limit $2-n$, the asymptotic behaviours can be obtained by perturbation expansions. 
To facilitate the exposition, we only consider the case $R=1$ and solve the eigenvalue problem 
\begin{equation}
\label{eqn:evp-R1}
T_1\bar{\rho}_1(x)= \lambda\bar{\rho}_1(x), \qquad \text{ for } x\in  B(0,1),
\end{equation}
where 
\begin{equation}
\label{eqn:int-eqscaled}
 T_1\bar{\rho}_1(x) = (n+q-2)\int_{B(0,1)}|x-y|^{q-2}\bar{\rho}_1(y)dy,
\end{equation}
and $\bar{\rho}_1$ vanishes outside $B(0,1)$. Provided we know  the spectral radius $\lambda$ of $T_1$ and its 
corresponding eigenfunction $\bar{\rho}_1$, the actual steady state $\bar{\rho}$ in~\eqref{eqn:int-eq} can be recovered as 
$\bar{\rho}(r)=c\bar{\rho}_1(r/R)$,  where $R=\lambda^{-1/(n+q-2)}$ and $c$ is a constant
related to the total mass (see the proof of Theorem~\ref{thm:PF}, in particular equations~\eqref{eqn:rho-scale} 
and~\eqref{eqn:R-gen}).

In the rest of this section, the total mass for $\bar{\rho}_1$ is assumed to be one.
All asymptotic behaviours are investigated in one dimension first, 
to illustrate the essential techniques and features in a relatively simple setting, and then in higher dimensions.


\paragraph{Asymptotic limit when $q\to\infty$.} In this limit, the numerically computed eigenfunctions   are
observed to be concentrated near the boundary and the dominant contribution of the integral 
operator~\eqref{eqn:int-eqscaled} comes from $|y|\approx 1$. This motivates the following asymptotic
construction.

Start with the eigenvalue problem~\eqref{eqn:evp-R1} in one dimension ($n=1$):
\begin{equation}\label{eq:onedlarge}
 \lambda\bar{\rho}_1(x) = (q-1)\int_{-1}^1 |x-y|^{q-2}\bar{\rho}_1(y)dy.
\end{equation}
When $x>0$, the above integral on the right hand side is dominated at $y\approx-1$, that is
\begin{align}
 (q-1)\int_{-1}^1 |x-y|^{q-2}\bar{\rho}_1(y)dy 
 & \approx (q-1)\bar{\rho}_1(-1)\int_{-1}^1 |x-y|^{q-2} dy\cr
& = \bar{\rho}_1(-1) \left( (1+x)^{q-1} + (1-x)^{q-1}\right). \label{eqn:approx1}
\end{align}
For $x<0$ we find the same expression, considering the symmetry $\bar{\rho}_1(1) = \bar{\rho}(-1)$. Evaluate~\eqref{eq:onedlarge} 
at $x=-1$ and use~\eqref{eqn:approx1} to approximate its right-hand-side. We find
$\lambda \approx 2^{q-1}$ and 
\begin{equation}\label{eqn:apprho1d}
 \bar{\rho}_1(x) \approx \frac{q}{2^{q+1}}\left( (1+x)^{q-1} + (1-x)^{q-1}\right),
\end{equation}
where the coefficient $q/2^{q+1}$ is chosen such that the approximated $\bar{\rho}_1$ has total mass one. 

Approximations, as $q\to \infty$, to the actual steady states (with unit mass) and their support can be found 
from~\eqref{eqn:rho-scale} and~\eqref{eqn:R-gen}. Hence, $R = \lambda^{-1/(q-1)} \approx 0.5$. However, 
this is not an accurate approximation, as it can be observed from Figure~\ref{fig:1dlarge}(a), where we plot the 
radius $R$ against $q$, as obtained from numerics (dots) and the above approximation (dashed line), the latter being 
referred to as the {\em coarse} approximation. To obtain the numerical results we used the methods described in 
Section~\ref{subsect:numerics}.

The approximation of the eigenvalue $\lambda$ 
can be improved considerably if in the right-hand-side of~\eqref{eq:onedlarge} one uses the expression of $\bar{\rho}_1$ 
given by~\eqref{eqn:apprho1d}.  Near $y \approx -1$, use $\bar{\rho}_1(y)\approx q (1-y)^{q-1}/2^{q+1}$ to approximate 
the right-hand-side in~\eqref{eq:onedlarge}. Then, evaluate at $x=1$ to find
\[
 \lambda \bar{\rho}_1(1) \approx  \frac{q(q-1)}{2^{q+1}}\int_{-1}^1 |1-y|^{2q-3}dy.
\]
The integral in the right-hand-side can be computed exactly and also, from~\eqref{eqn:apprho1d},  
$\bar{\rho}_1(1) \approx q/4$. We derive the {\em refined} approximation $\lambda \approx 2^{q-2}$, with  the corresponding radius,
\begin{equation}
\label{eqn:R-refined}
R \approx 2^{-(q-2)/(q-1)}.
\end{equation}
The refined approximation, displayed as solid line in Figure~\ref{fig:1dlarge}(a) shows an excellent 
agreement with the numerical results (dots). Note that $R\to 0.5$ (the coarse approximation), as $q\to \infty$, 
but the convergence is slow. 

Formally, the eigenfunction~\eqref{eqn:apprho1d} can be regarded as 
$\bar{\rho}^{(1)}$ obtained from the power method iteration~\eqref{eqn:powermethod}, by starting with a constant initial guess $\bar{\rho}^{(0)}\equiv 1$. The coarse approximation $2^{q-1}$ of the eigenvalue is exactly $\lambda^{(1)}$, while the refined approximation $2^{q-2}$ is $\lambda^{(2)}$. Here the norm
 $\|\cdot\|$ is the function evaluation at $x=1$, i.e., $\|\bar{\rho}\|=\bar{\rho}(1)$. The same idea is
applied below in higher dimensions, even though the expressions are much more complicted.
This fact also illustrates the fast convergence of the iterative scheme~\eqref{eqn:powermethod} for large $q$.

We also compare the steady states, as obtained by numerics (see methods in Section~\ref{subsect:numerics}) and 
asymptotics (expression~\eqref{eqn:apprho1d}). Figure~\ref{fig:1dlarge}(b) shows an excellent agreement between 
the two solutions for $q=10$ and $q=20$.

\begin{figure}[htb]
 \begin{center}
\includegraphics[totalheight=0.28\textheight]{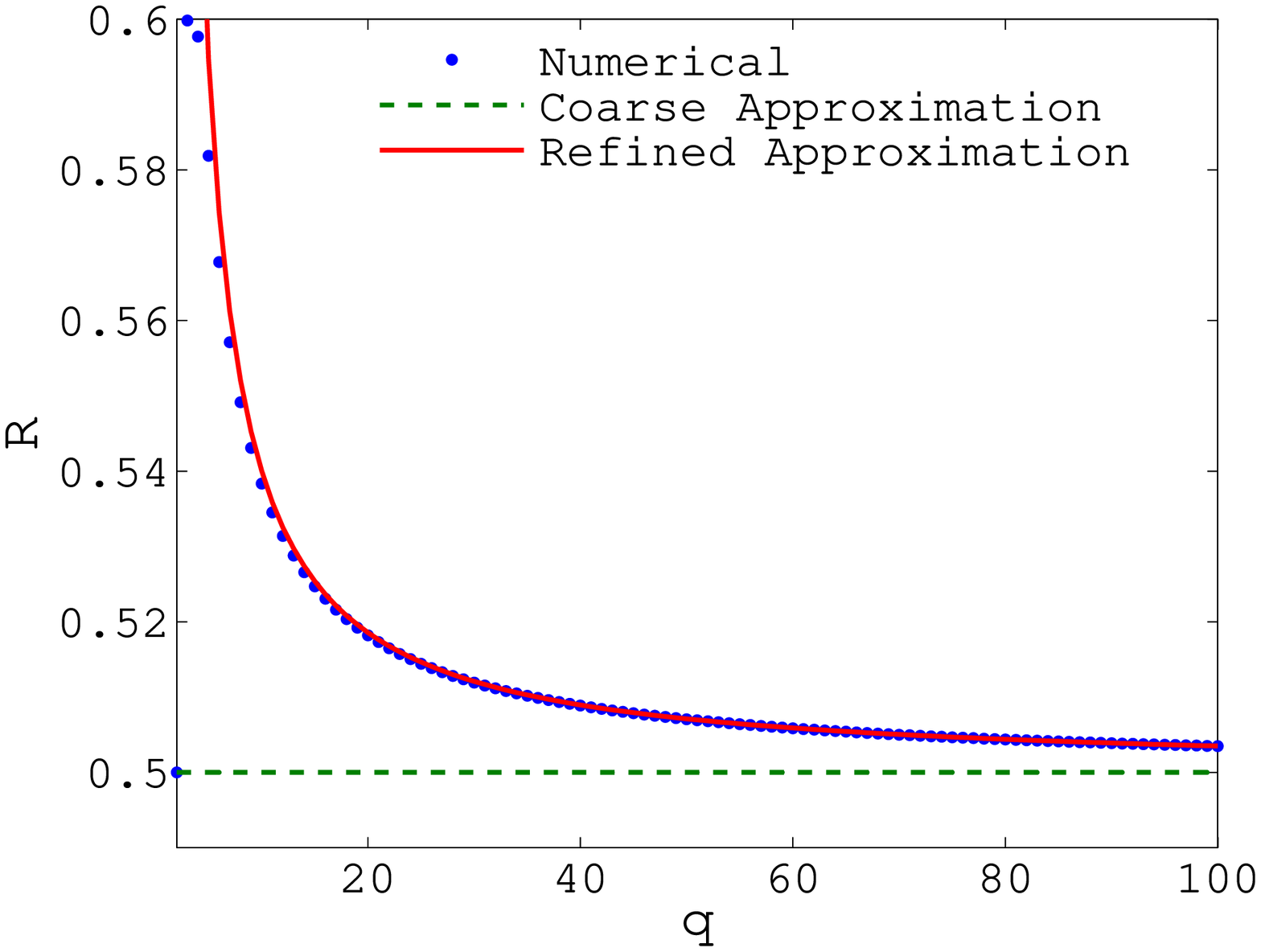}
\includegraphics[totalheight=0.28\textheight]{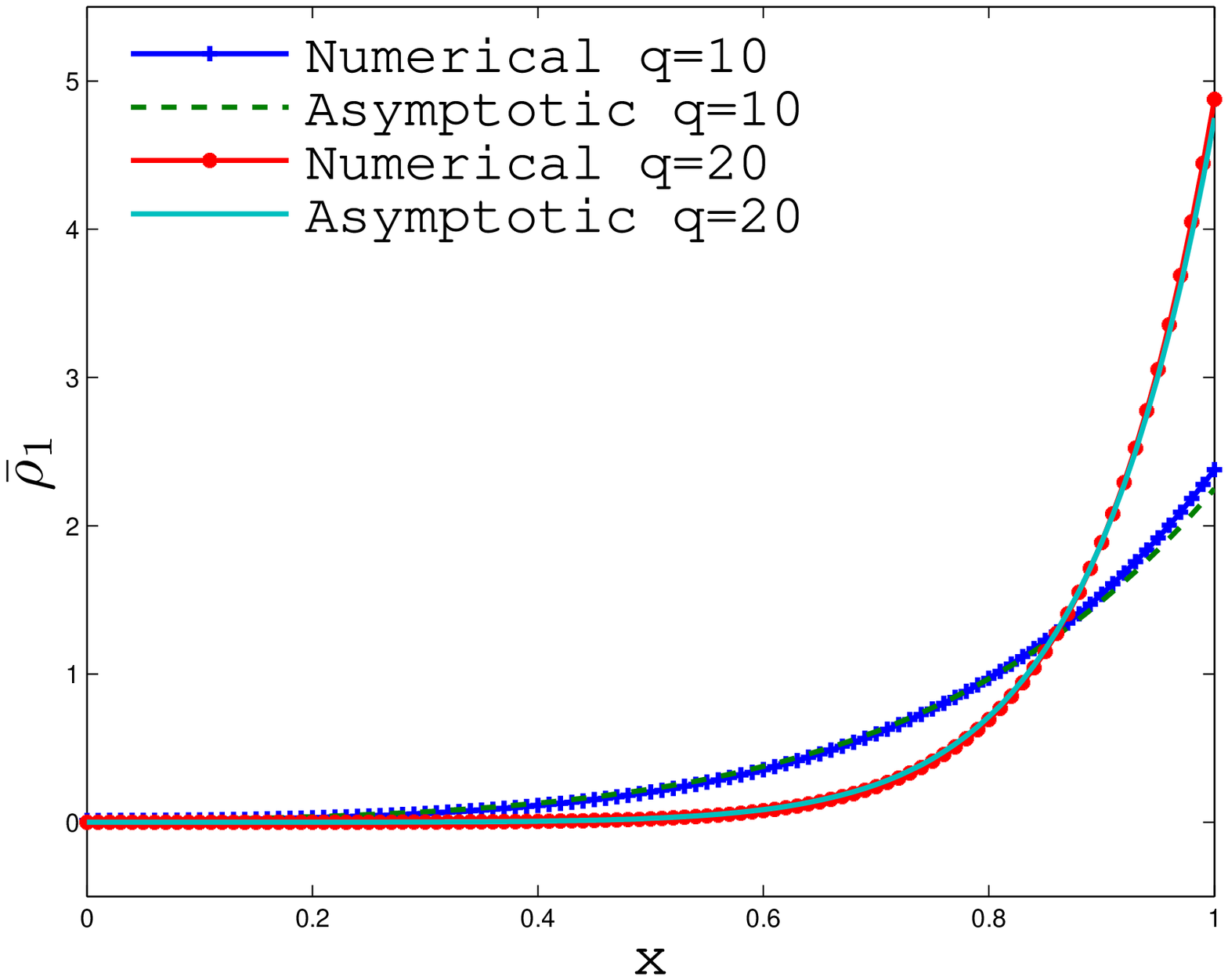}
\end{center}
\[
\hspace{0.8cm} \textbf{(a)} \hspace{7cm} \textbf{(b)} 
\]
\caption{Numerical and asymptotic solutions in dimension one, as $q\to \infty$. (a) Radius $R$ of the steady 
states~\eqref{eqn:int-eq}. The refined asymptotic approximation~\eqref{eqn:R-refined} (solid line) agrees 
extremely well with the numerical solution (dots). The coarse approximation $R=0.5$ (dashed line) captures 
the (slow) asymptotic limit, as $q \to \infty$. (b) Eigenfunctions $\bar{\rho}_1$ of $T_1$ computed by 
numerical (Section~\ref{subsect:numerics}) and asymptotic (equation~\eqref{eqn:apprho1d}) methods. The agreement 
between the two sets of results is excellent.}
\label{fig:1dlarge}
\end{figure}

In higher dimensions,  the eigenvalue problem can be approximated in a similar manner. Based on numerical 
observations, we assume that most contribution in the integral from~\eqref{eqn:int-eqscaled} comes from the boundary. Hence, 
the eigenvalue problem for $T_1$ is approximated by 
\begin{equation}\label{eq:largqhigherd}
    \lambda \bar{\rho}_1(x) \approx (n+q-2)\bar{\rho}_1(1)\int_{B(0,1)}|x-y|^{q-2}dy.
\end{equation}
In general, the integral on the right-hand-side can not be integrated explicitly in the natural radial coordinates.
However, when the origin is shifted to $x$, we have
\begin{align}
 \int_{B(0,1)}|x-y|^{q-2}dy &= \int_{B(x,1)} |y|^{q-2}dy \nonumber \\
&= \frac{n \omega_n}
{\int_0^\pi \sin^{n-2}\theta d\theta}\int_0^\pi \sin^{n-2}\theta
\int_{r^2+r'^2-2rr'\cos\theta\leq 1} (r')^{n+q-3}dr'd\theta \nonumber \\
&=\frac{n \omega_n}{(n+q-2)\int_0^\pi \sin^{n-2}\theta d\theta} \int_0^\pi \sin^{n-2}
\theta(\sqrt{1-r^2\sin^2\theta}-r\cos\theta)^{n+q-2}d\theta.
\label{eqn:shift}
\end{align}

The approximation to the eigenvalue $\lambda$ is obtained by evaluating 
expression~\eqref{eq:largqhigherd} at $r=1$, which gives
\begin{align}
    \lambda &\approx  \frac{n \omega_n}{\int_0^\pi \sin^{n-2}\theta d\theta}\int_0^\pi \sin^{n-2}\theta 
(|\cos\theta|-\cos\theta)^{n+q-2}d\theta \nonumber \\
&= (n-1) \omega_{n-1} \, 2^{n+q-3}\mbox{Beta}\left(\frac{n-1}{2},\frac{n+q-1}{2}\right) \cr
&= (n-1)\omega_{n-1}2^{n+q-3}\Gamma(\frac{n-1}{2})\Gamma(\frac{n+q-1}{2})/\Gamma(n-1+\frac{q}{2}),
\label{eqn:lambda-3d}
\end{align}
where $\mbox{Beta}$ and $\Gamma$ are the beta and Gamma functions, respectively. 

The radius of the support $R = \lambda^{-1/(n+q-2)}$ can then be approximated as $q\to \infty$ using  Stirling's approximation,
\[
 \Gamma(z+1) \sim \sqrt{2\pi}z^{z+1/2}e^{-z}.
\]
We refer to the outcome of this procedure as the {\em coarse} approximation and we plot the result  in Figure \ref{fig:highdlargq}(a) (dashed line). Note the low accuracy of this approximation, when compared to the numerical calculation (dots). The coarse approximation approaches however,  as $q\to \infty$, the correct value $0.5$, but the convergence is extremely slow.

The eigenfunction $\bar{\rho}_1$ is approximated from~\eqref{eq:largqhigherd}:
\begin{equation}\label{eq:largqhdrho}
    \bar{\rho}_1(r) \approx C\int_0^\pi \sin^{n-2}\theta(\sqrt{1-r^2\sin^2\theta}-r\cos\theta)^{n+q-2}d\theta,
\end{equation}
where $C$ is a normalization constant determined by the total mass. 

Similar to the 1D case, using the explicit approximation~\eqref{eq:largqhdrho} of the eigenfunction, one can  improve considerably the approximation of the corresponding eigenvalue $\lambda$ (and hence of $R$), by
substituting~\eqref{eq:largqhdrho} into~\eqref{eq:largqhigherd} and evaluating at $r=1$.  The outcome of this improved procedure is referred to as the {\em refined} approximation and is plotted (solid line) in Figure~\ref{fig:highdlargq}(a).  The agreement of the refined approximation with the numerical results is now excellent. There is also a very good agreement between the eigenfunctions computed numerically (see Section~\ref{subsect:numerics}) and their asymptotic approximation provided by \eqref{eq:largqhdrho} --- see Figure \ref{fig:highdlargq}(b).

{\em Remark.} Since the integral expression~\eqref{eq:largqhdrho} for the approximation of $\bar{\rho}_1$ is concentrated at 
$\theta\approx \pi$, we can use Laplace's method to find the leading order when $r$ is away from zero. Since 
$\bar{\rho}_1(1)=C2^{n+q-3}\mbox{Beta}\left(\frac{n-1}{2},\frac{n+q-1}{2}\right)$, and for $\theta \approx \pi$,
\[
 \sqrt{1-r^2\sin^2\theta}-r\cos\theta = (1+r)\left[1-r(\pi-\theta)^2/2+\cdots\right],
\]
the eigenfunction $\bar{\rho}_1$ can be further approximated away from the origin as
\begin{align*}
 \bar{\rho}_1(r) &\approx C(1+r)^{n+q-2}\int_0^\pi \sin^{n-2}\theta (1-r(\pi-\theta)^2)^{n+q-2}d\theta \cr
&= C(1+r)^{n+q-2} \int_0^\infty \theta^{n-2}e^{-(n+q-2)r\theta^2}d\theta\cr
&=Cr^{-(n-1)/2}(1+r)^{n+q-2}.
\end{align*}
However, this expression breaks down for $r$ near the origin and approximation \eqref{eq:largqhdrho} is used instead in the plot from Figure~\ref{fig:highdlargq}(b).

To conclude, the asymptotic study as $q \to \infty$ shows that the radii of support  of the equilibria \eqref{eqn:int-eq} converge (slowly) to a fixed value $R=0.5$, while the density 
concentrates to a $\delta$-sphere of radius 0.5. We also point out  that this asymptotic behaviour applies to all dimensions $n$.

\begin{figure}[htb]
 \begin{center}
  \includegraphics[totalheight=0.27\textheight]{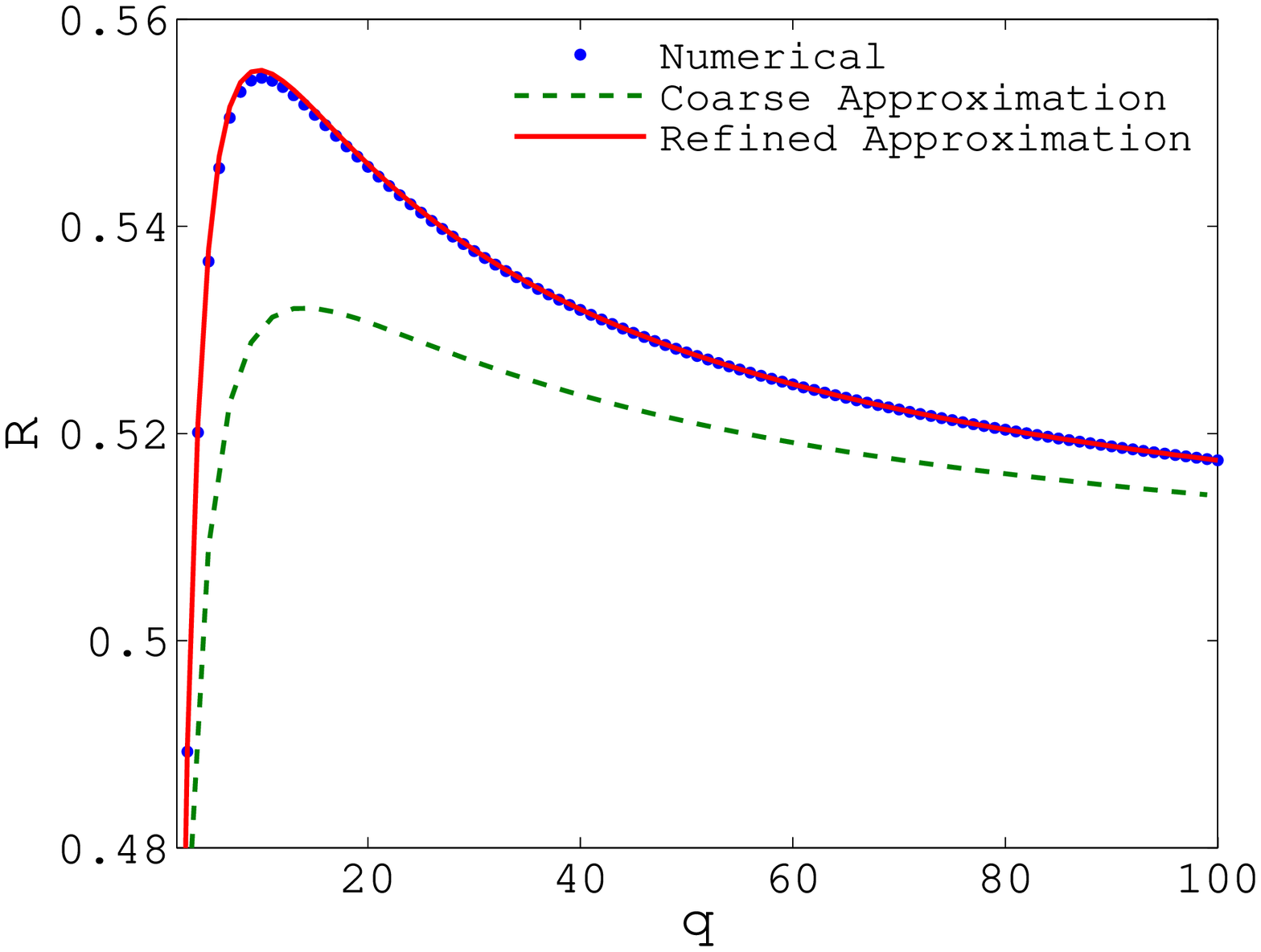}
\includegraphics[totalheight=0.27\textheight]{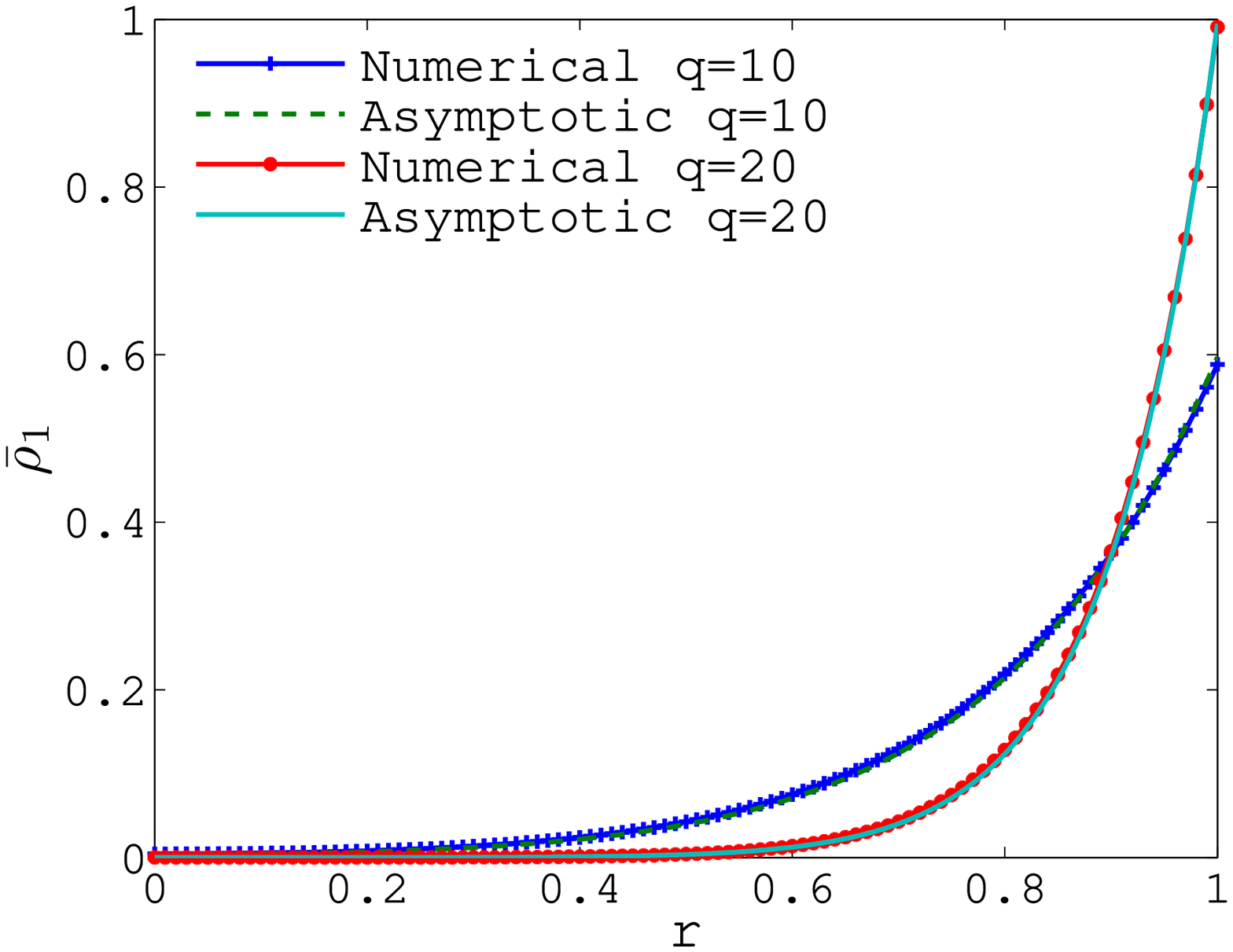}
\end{center}
\[
\hspace{0.8cm} \textbf{(a)} \hspace{7.5cm} \textbf{(b)} 
\]
\caption{Numerical and asymptotic solutions in dimension three, as $q\to \infty$. 
(a) Radius $R$ of the steady states~\eqref{eqn:int-eq}. The refined asymptotic approximation (solid line) and the numerical solution (dots) agree extremely well. The dashed line represent the coarse approximation obtained from  \eqref{eqn:lambda-3d} and Stirling's formula.  The radii $R$ approach (very slowly)  
the constant $0.5$, as $q \to \infty$. (b) Eigenfunctions $\bar{\rho}_1$ of $T_1$ computed by numerical and asymptotic  (approximation  \eqref{eq:largqhdrho}) methods. There is very good agreement between the two solutions. }
\label{fig:highdlargq}
\end{figure}


\paragraph{Asymptotic limit when $q\to 2-n$.} We write  
\[ 
q= 2-n +\epsilon,
\]
 and perform an asymptotic study in the small $\epsilon$ regime. We start with solutions in one dimension first, where 
the eigenvalue problem~\eqref{eqn:evp-R1} reads
\begin{equation}\label{eq:sing1d}
 \lambda_\epsilon \bar{\rho}_1^\epsilon(x) = 
\epsilon\int_{-1}^1 |x-y|^{\epsilon-1}\bar{\rho}_1^\epsilon(y)dy,\qquad 
\epsilon = q-1.
\end{equation}
Here the subscript and superscript $\epsilon$ is used to emphasize the dependence of the eigenvalue $\lambda$ and 
the corresponding eigenfunction $\bar{\rho}_1$ on $\epsilon$. The asymptotic expansion suggested from numerical simulation is
\begin{subequations}
\label{eqn:expansion}
\begin{equation}\label{eq:lambda1eps}
 \lambda_\epsilon = \lambda_0+\lambda_1\epsilon+\lambda_2\epsilon^2+\cdots,
\end{equation}
\begin{equation}\label{eq:rho1eps}
 \bar{\rho}_1^\epsilon(x) = \bar{\rho}^{(0)}(x)+\epsilon \bar{\rho}^{(1)}(x)+\epsilon^2\bar{\rho}^{(2)}(x)+\cdots.
\end{equation}
\end{subequations}

The kernel $|x-y|^{\epsilon-1}$ is not integrable in the limit when $\epsilon\to 0$, and thus 
we can not substitute the formal expansions into~\eqref{eq:sing1d} directly.
Hence, it is not possible to carry out a straightforward expansion. However, using the fact that
\begin{equation*}
 \epsilon\int_{-1}^1 |x-y|^{\epsilon-1} dy = (1+x)^{\epsilon}+(1-x)^{\epsilon},
\end{equation*}
the governing equation~\eqref{eq:sing1d} can be written as
\begin{equation}
\label{eqn:mod1d}
 \left[\lambda_\epsilon-(1-x)^\epsilon-(1+x)^\epsilon\right]\bar{\rho}^\epsilon_1(x)
=\epsilon\int_{-1}^1 |x-y|^{\epsilon-1}\left[\bar{\rho}^\epsilon_1(y)-\bar{\rho}^\epsilon_1(x)\right]dy.
\end{equation}
Now using the asymptotic expansions~\eqref{eq:lambda1eps} and~\eqref{eq:rho1eps}, we obtain
\begin{subequations}
\begin{align}
 O(1):&\qquad \lambda_0-2 = 0 \label{eq:1d0}\\
 O(\epsilon):&\qquad \int_{-1}^1 |y-x|^{-1}(\bar{\rho}^{(0)}(y)-\bar{\rho}^{(0)}(x))dy-
(\lambda_1-\ln(1-x^2))\bar{\rho}^{(0)}(x))=0\label{eq:1d1}\\
O(\epsilon^2):&\qquad \int_{-1}^1 |y-x|^{-1}(\bar{\rho}^{(1)}(y))-\bar{\rho}^{(1)}(x)))dy
-(\lambda_1-\ln(1-x^2))\bar{\rho}^{(1)}(x))\cr
& = \left(
\lambda_2-\frac{\ln^2(1-x)+\ln^2(1+x)}{2}\right)\bar{\rho}^{(0)}(x))
-\int_{-1}^1 \frac{\ln|x-y|}{|y-x|}(\bar{\rho}^{(0)}(y))-\bar{\rho}^{(0)}(x)))dy.
\label{eq:1d2}
\end{align}
\end{subequations}
The first equation~\eqref{eq:1d0} yields $\lambda_0=2$ and the second equation~\eqref{eq:1d1} is 
an eigenvalue problem for the limiting profile $\bar{\rho}^{(0)}$. 
Integrating the equation~\eqref{eq:1d1} with respect to $x$, we can get an
alternative expression for the eigenvalue $\lambda_1$ as
\begin{equation}\label{eq:lambda1exp}
 \lambda_1 = \frac{\int_{-1}^1 \bar{\rho}^{(0)}(x)\ln(1-x^2)dx}{\int_{-1}^1 \bar{\rho}^{(0)}(x)dx}.
\end{equation}
Even though we do not expect any explicit solutions to~\eqref{eq:1d1}, we can discretize and solve
it using inverse iteration~\cite{GolubVanLoan}. The condition $\bar{\rho}^{(0)}(\pm 1)=0$,
motivated from the numerical calculation of the steady states, is used to get rid of
 the singularity on the boundary. The initial data for the inverse iteration can be taken as 
the steady state $\bar{\rho}_1^\epsilon$ calculated numerically from~\eqref{eq:sing1d} 
with $\epsilon$ close to zero (and initial guess of the eigenvalue from~\eqref{eq:lambda1exp}  
with $\bar{\rho}^{(0)}$ replaced by $\bar{\rho}_1^\epsilon$).  This inverse power iteration normally converges in 
just a few steps.

The solution $\bar{\rho}^{(1)}$ of~\eqref{eq:1d2} is more challenging. Since this first order correction 
$\bar{\rho}^{(1)}$ does not provide much insight into the problem, we focus instead on the second order 
correction $\lambda_2$ of the eigenvalue $\lambda$. 
By the solvability condition for~\eqref{eq:1d2}, the right hand side of~\eqref{eq:1d2} is orthogonal to 
$\bar{\rho}^{(0)}$, giving 
\begin{align}
 \lambda_2\int_{-1}^1 \left[\bar{\rho}^{(0)}(x)\right]^2dx &=
 \int_{-1}^1 \frac{\ln|y-x|}{|y-x|}(\bar{\rho}^{(0)}(y)-\bar{\rho}^{(0)}(x))\bar{\rho}^{(0)}(x)dydx\cr
&\quad +\int_{-1}^1 \frac{\ln^2(1-x)+\ln^2(1+x)}{2}\left[\bar{\rho}^{(0)}(x)\right]^2dx.
\label{eq:lambda2exp}
\end{align}

Figure~\ref{fig:ldsmall}(a) shows the normalized steady states $\bar{\rho}_1^\epsilon$ for $\epsilon=1,0.5,0.2$ 
(corresponding to $q=2,1.5,1.2$), as computed numerically from the eigenvalue problem~\eqref{eq:sing1d} using the 
methods described in Section~\ref{subsect:numerics}. In the same figure we also plot (plain solid line) the leading 
order term $\bar{\rho}^{(0)}$ of the expansion~\eqref{eq:rho1eps}, obtained by solving~\eqref{eq:1d1} with the 
inverse iteration method. The plot confirms that the equilibria $\bar{\rho}_1^\epsilon$ approach the limiting 
profile $\bar{\rho}^{(0)}$ as $\epsilon \to 0$.

In Figure~\ref{fig:ldsmall}(b) we plot (dots) the eigenvalues $\lambda_\epsilon$  for various 
values of $\epsilon \in (0,1]$ (corresponding to $q \in (1,2]$), computed directly from~\eqref{eq:sing1d} using 
the power method (Section~\ref{subsect:numerics}). The dashed line represents the asymptotic approximation 
of $\lambda^\epsilon$ from~\eqref{eq:lambda1eps} that includes second order corrections ($\lambda_1$ is computed 
from~\eqref{eq:lambda1exp} and $\lambda_2$ from~\eqref{eq:lambda2exp}. The agreement between the two sets of results 
for small $\epsilon$'s is excellent. 

\begin{figure}[htb]
    \begin{center}
        \includegraphics[totalheight=0.27\textheight]{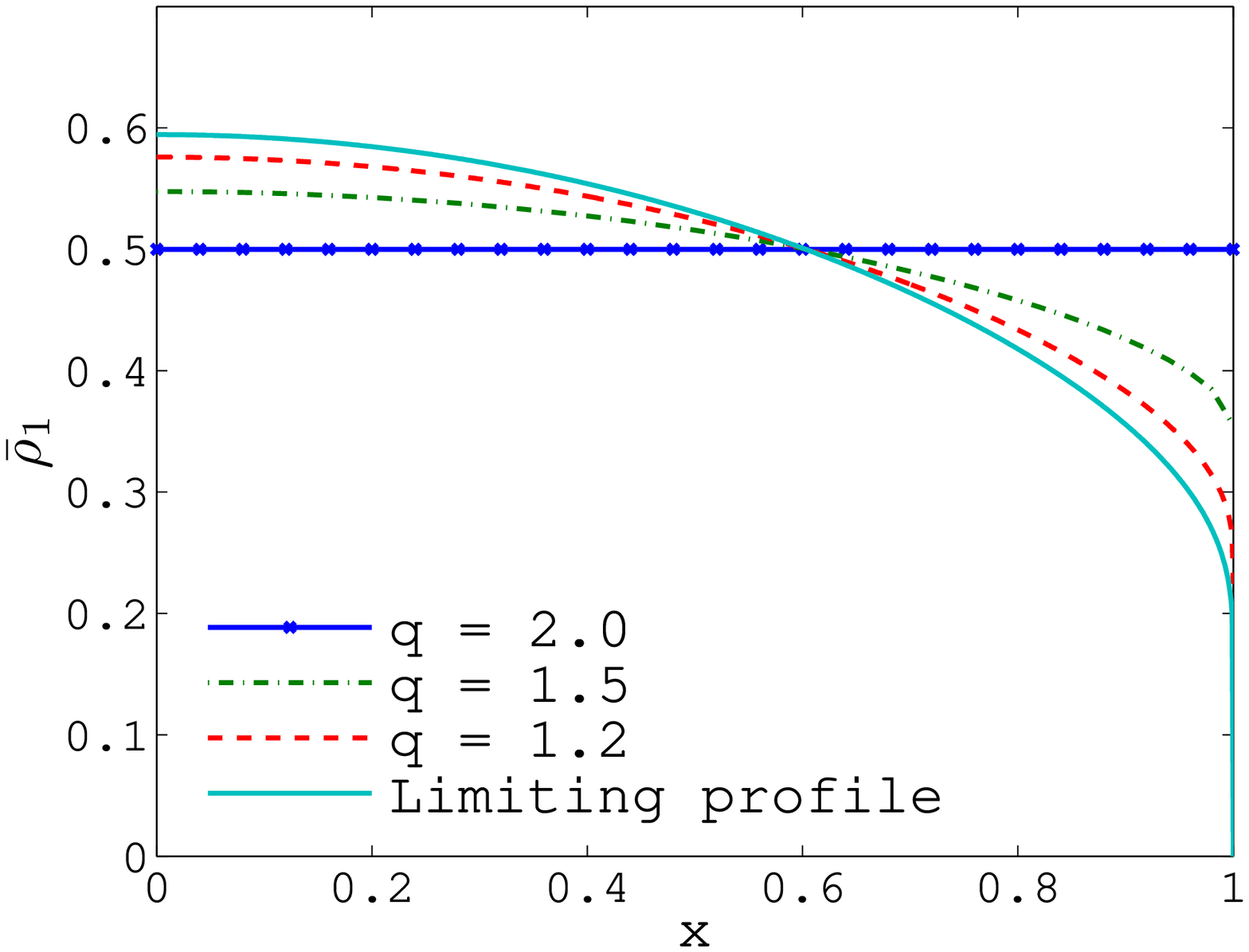}
        \includegraphics[totalheight=0.27\textheight]{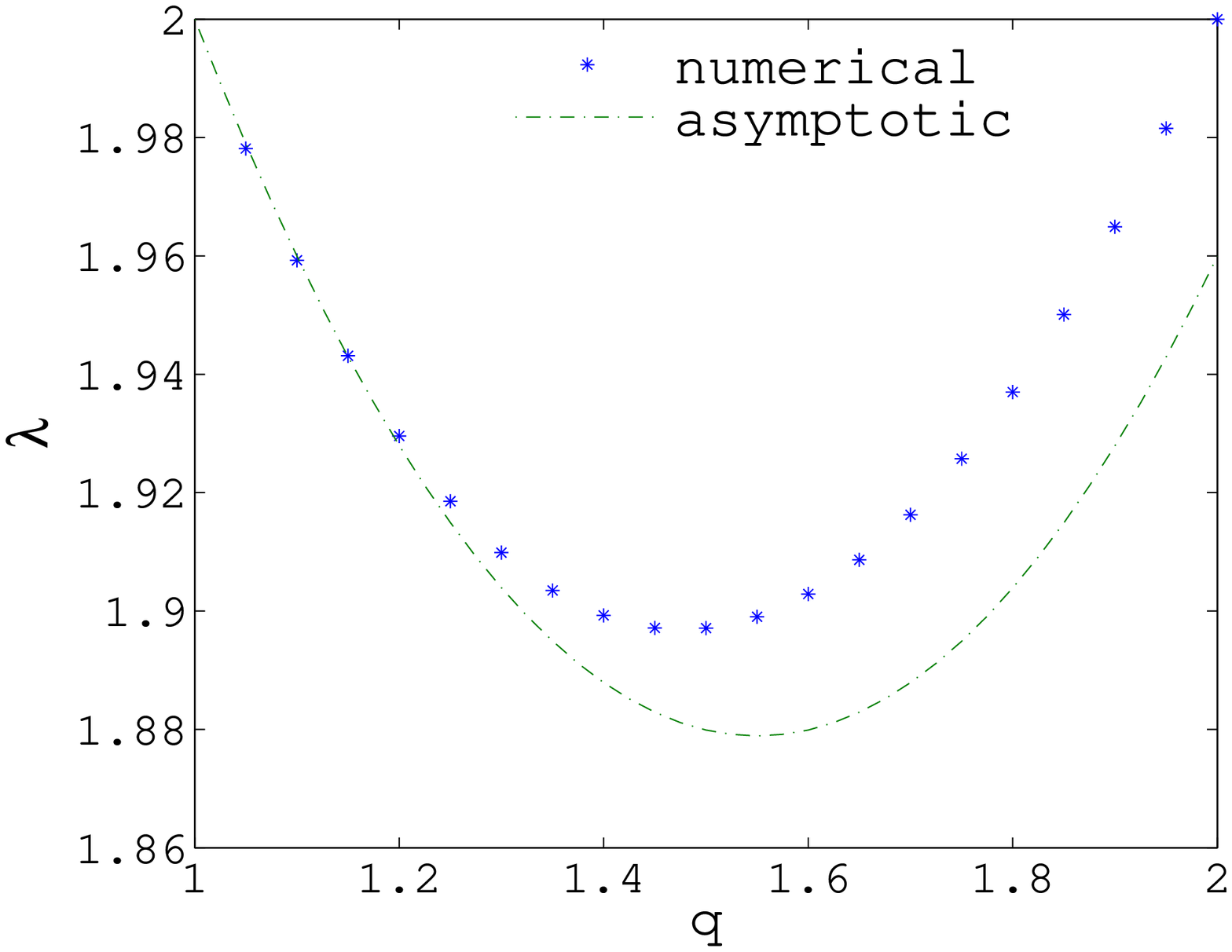}
    \end{center}
\[
\hspace{0.8cm} \textbf{(a)} \hspace{7.5cm} \textbf{(b)} 
\]
    \caption{ (a) Normalized steady states $\bar{\rho}_1^\epsilon$ for $\epsilon=1,0.5,0.2$ ($q=2,1.5,1.2$) 
computed directly from~\eqref{eq:sing1d} using the numerical methods from Section~\ref{subsect:numerics}. The plain 
solid line represents the leading order term $\bar{\rho}^{(0)}$ of the expansion~\eqref{eq:rho1eps}. Equilibria 
$\bar{\rho}_1^\epsilon$ approach the limiting profile $\bar{\rho}^{(0)}$ as $\epsilon \to 0$. 
(b) Eigenvalues $\lambda_\epsilon$  for various values of $\epsilon \in (0,1]$ ($q \in (1,2]$), computed 
directly from~\eqref{eq:sing1d} (dots). The dashed line is the asymptotic approximation of $\lambda^\epsilon$ 
from~\eqref{eq:lambda1eps} valid at order $O(\epsilon^2)$. Note the excellent agreement between numerics and asymptotics 
for small $\epsilon$ ($q$ close to $1$).
}
\label{fig:ldsmall}
\end{figure}

At a closer inspection, it becomes clear that the expansion~\eqref{eq:rho1eps} may be non-uniform
near the boundary $x=1$. In Figures~\ref{fig:ldsmallbv}(a)-(b)  we plot  $\bar{\rho}_1^{\epsilon}$ evaluated 
at $x=0$ and $x=1$, respectively,  for different values of $\epsilon$. The eigenfunctions $\bar{\rho}_1^{\epsilon}$  are 
computed directly from~\eqref{eq:sing1d} --- see also Figure~\ref{fig:ldsmall}(a). At the origin 
(Figure~\ref{fig:ldsmallbv}(a)) we find a linear dependence on $\epsilon$, as the approximation $\bar{\rho}^\epsilon_1(0)=
\bar{\rho}^{(0)}(0)+\epsilon\bar{\rho}^{(1)}(0)+\cdots$ is uniform. However, at $x=1$, we find 
$\bar{\rho}^\epsilon_1(1)\sim \sqrt{0.253\epsilon+O(\epsilon^2)}$, hence the expansion~\eqref{eq:rho1eps} is non-uniform 
near the boundary (Figure \ref{fig:ldsmallbv}(b)). To find a valid asymptotic expansion near $x=1$ one has to  introduce  a boundary layer and perhaps use the method of matched asymptotics to relate the inner and outer expansions. We do not pursue this  direction here.

\begin{figure}[htb]
    \begin{center}
        \includegraphics[totalheight=0.27\textheight]{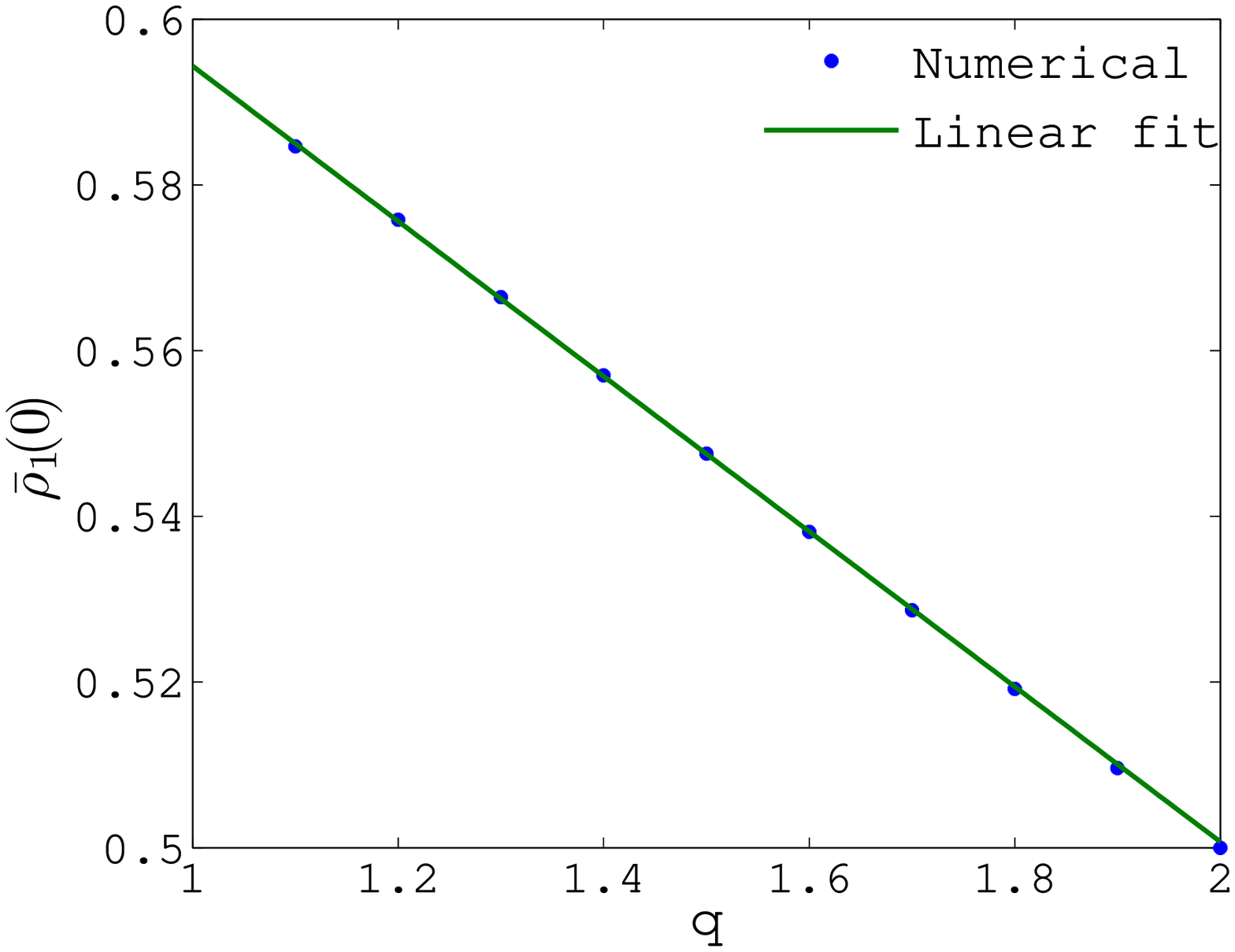}
        \includegraphics[totalheight=0.27\textheight]{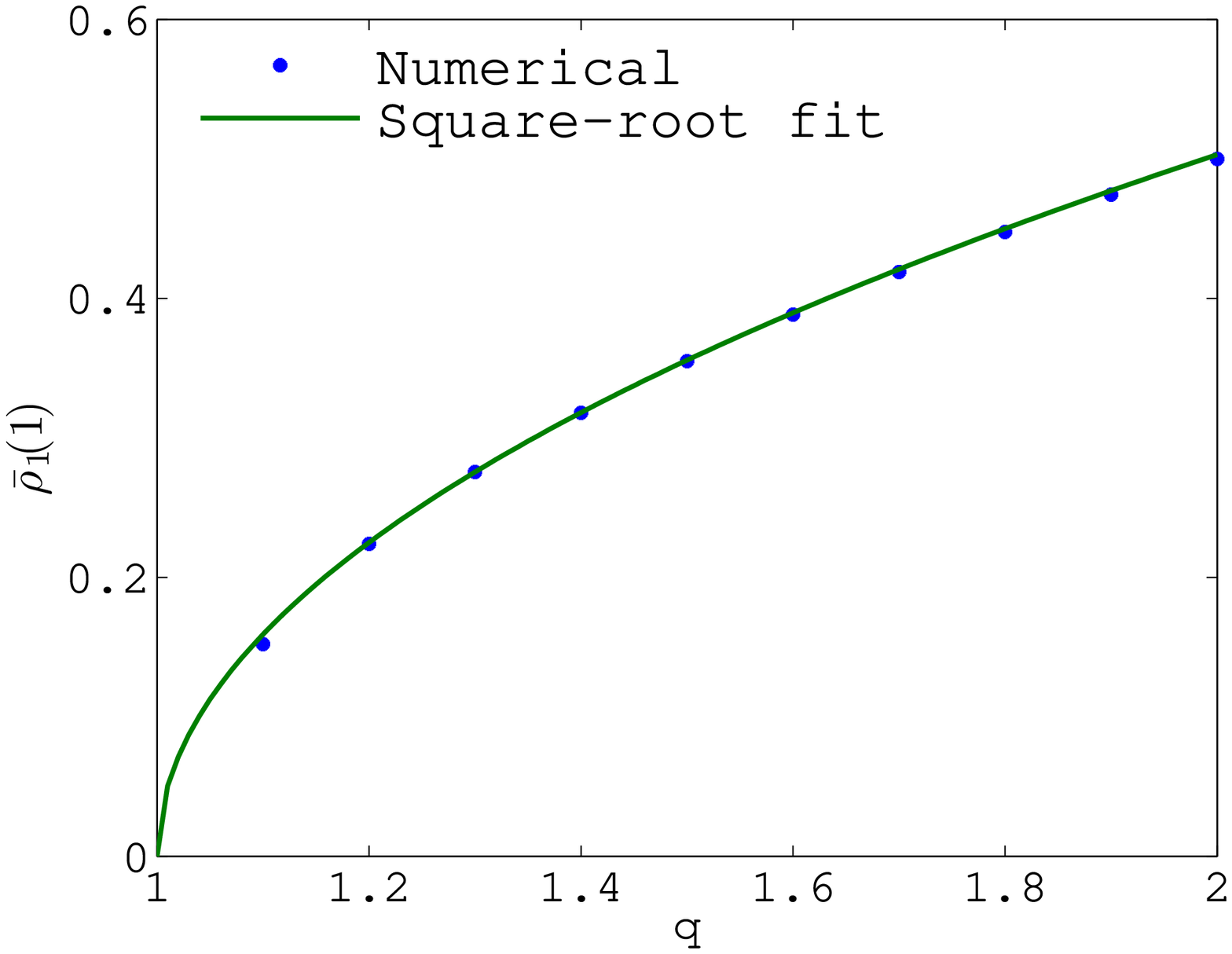}
    \end{center}
\[
\hspace{0.8cm} \textbf{(a)} \hspace{7.5cm} \textbf{(b)} 
\]
\caption{(a) Validation of the asymptotic expansion~\eqref{eq:rho1eps} away from the boundary. At $x=0$, 
$\bar{\rho}^\epsilon_1(0)$ has a linear dependence on $\epsilon$ of the form $0.5944-0.0936\epsilon$, 
as the expansion~\eqref{eq:rho1eps} is 
uniform near the origin. (b) At $x=1$,   $\bar{\rho}^\epsilon_1(1)$ has a square-root dependence on $\epsilon$ of the form
$\sqrt{0.2530\epsilon}$. 
There is a boundary layer near $x=1$ where the expansion~\eqref{eq:rho1eps} is no longer valid. }
\label{fig:ldsmallbv}
\end{figure}

The situation in higher dimensions is similar. We write the eigenvalue problem~\eqref{eqn:evp-R1} as
\begin{equation}\label{eq:highdsing}
 (\lambda_\epsilon - k_\epsilon(r))\bar{\rho}^\epsilon_1(r) = 
\epsilon\int_{B(0,1)}|x-y|^{\epsilon-n}(\bar{\rho}^\epsilon_1(y) -\bar{\rho}^{\epsilon}_1(x))dy,\qquad r=|x|,
\end{equation}
where we used again subscripts and superscripts to emphasize the dependence on $\epsilon=q+n-2$, and 
the auxiliary function $k_\epsilon$ is defined by
\begin{equation*}
 k_\epsilon(|x|) = \epsilon\int_{B(0,1)}|x-y|^{\epsilon-n}dy.
\end{equation*}
Equation~\eqref{eq:highdsing} is the analogue of~\eqref{eqn:mod1d} from 1D.

The auxiliary function $k_\epsilon(r)$ can be simplified by shifting the origin (see the calculation leading 
to~\eqref{eqn:shift}), with the result:
\begin{align*}
 k_\epsilon(r) &= \frac{n \omega_n}{\int_0^\pi \sin^{n-2}\theta d\theta}  \int_0^\pi \sin^{n-2}\theta 
(\sqrt{1-r^2\sin^2\theta}-r\cos\theta)^{\epsilon} d\theta \cr
&=  \frac{n \omega_n}{\int_0^\pi \sin^{n-2}\theta d\theta} \int_0^\pi \sin^{n-2}\theta 
\Bigl( 1+\epsilon \ln (\sqrt{1-r^2\sin^2\theta}-r\cos\theta) \cr
&\qquad +\frac{\epsilon^2}{2} \ln^2 (\sqrt{1-r^2\sin^2\theta}-r\cos\theta)+O(\epsilon^3) \Bigr) d\theta, \cr
\end{align*}
where we also used a Taylor expansion in $\epsilon$ to expand  $(\sqrt{1-r^2\sin^2\theta}-r\cos\theta)^{\epsilon}$. 
The $O(\epsilon)$ term can be simplified using the the following calculation:
\begin{align*}
&\quad \int_0^\pi \sin^{n-2}\theta\ln(\sqrt{1-r^2\sin^2\theta}-r\cos\theta)d\theta \cr
&= \int_0^{\pi/2} \sin^{n-2}\theta \left[\ln(\sqrt{1-r^2\sin^2\theta}-r\cos\theta)+
\ln(\sqrt{1-r^2\sin^2\theta}+r\cos\theta)\right]d\theta \cr
&=\ln(1-r^2)\int_0^{\pi/2}\sin^{n-2}\theta d\theta.
\end{align*}
Hence,
\begin{equation}
\label{eqn:k-eps}
 k_\epsilon(r) = n \omega_n  + \frac{n\omega_n}{2} \ln(1-r^2) \epsilon +k^{(2)}(r)\epsilon^2 +O(\epsilon^3),
\end{equation}
where 
\[
k^{(2)}(r) =  \frac{n\omega_n}{2\int_0^\pi \sin^{n-2}\theta d\theta} 
\int_0^\pi \sin^{n-2}\theta \ln^2(\sqrt{1-r^2\sin^2\theta}-r\cos\theta)d\theta.
\]

Since the integrand in~\eqref{eq:highdsing} is integrable provided $\bar{\rho}_1^\epsilon$ is H\"{o}lder continuous, 
by introducing the formal asymptotic expansion~\eqref{eqn:expansion} in~\eqref{eq:highdsing} and using~\eqref{eqn:k-eps}, one finds
\begin{subequations}
\begin{align}
 O(1): &\qquad \lambda_0 - n\omega_n \label{eq:nd0} =0 \\
O(\epsilon):& \qquad \int_{B(0,1)} |y-x|^{-n}(\bar{\rho}^{(0)}(x)-\bar{\rho}^{(0)}(y))dy -
(\lambda_1-\frac{1}{2} n \omega_n \ln(1-|x|^2))\bar{\rho}^{(0)}(x) = 0 \label{eq:nd1} \\
O(\epsilon^2): & \qquad \int_{B(0,1)}|y-x|^{-n}(\bar{\rho}^{(1)}(y)-\bar{\rho}^{(1)}(x))dy
-\left(\lambda_1- \frac{1}{2} n \omega_n \ln(1-|x|^2)\right)\bar{\rho}^{(1)}(x) =\cr
&\quad\qquad \left(\lambda_2-k^{(2)}(|x|)\right)\bar{\rho}^{(0)}(x) -
\int_{B(0,1)}|y-x|^{-n}\ln|y-x| (\bar{\rho}^{(0)}(y)-\bar{\rho}^{(0)}(x))dy. \label{eq:nd2}
\end{align}
\end{subequations}

Therefore, the leading order of the eigenvalue is $\lambda_0 = n\omega_n$ and the limiting steady state $\bar{\rho}^{(0)}$
and the first order correction $\lambda_1$ can be solved by inverse iteration. The second order correction
$\lambda_2$ can be obtained from the solvability condition:
\begin{align}
&\qquad \lambda_2 \int_{B(0,1)} \left[\bar{\rho}^{(0)}(x)\right]^2 dx
-\int_{B(0,1)} k^{(2)}(|x|)\left[\bar{\rho}^{(0)}(x)\right]^2  dx\cr
&=\int_{B(0,1)}\int_{B(0,1)}|y-x|^{-n}\ln|y-x| 
\left[\bar{\rho}^{(0)}(y)-\bar{\rho}^{(0)}(x))\right]\bar{\rho}^{(0)}(x) dy dx
\end{align}

In Figure~\ref{fig:higherdsmall} we test  the asymptotic results in three dimensions. The qualitative behaviour of 
the steady states $\bar{\rho}^\epsilon_1$ and their corresponding  eigenvalues $\lambda_\epsilon$ is similar 
to what has been observed in one dimension (Figure~\ref{fig:ldsmall}).

Figure~\ref{fig:higherdsmall}(a) shows the normalized steady states $\bar{\rho}_1^\epsilon$ for $\epsilon = 2,1,0.5,0.2$ 
(corresponding to $q = 1,0,-0.5,-0.8$) obtained by solving numerically~\eqref{eqn:evp-R1}, along with the limiting 
profile $\bar{\rho}^{(0)}$ (plain solid line) found by inverse iteration from~\eqref{eq:nd1}. The asymptotic results 
are confirmed, as equilibria $\bar{\rho}_1^\epsilon$ approach the limiting profile $\bar{\rho}^{(0)}$ for $\epsilon \to 0$.
In Figure~\ref{fig:higherdsmall}(b) we test the asymptotic expansion~\eqref{eq:lambda1eps} at order $O(\epsilon^2)$ 
(dashed line) against the numerical solution of~\eqref{eqn:evp-R1} (dots). The agreement is excellent for 
small $\epsilon$ or equivalently, for $q$ close to the critical value $-1$.

\begin{figure}[htb]
    \begin{center}
        \includegraphics[totalheight=0.26\textheight]{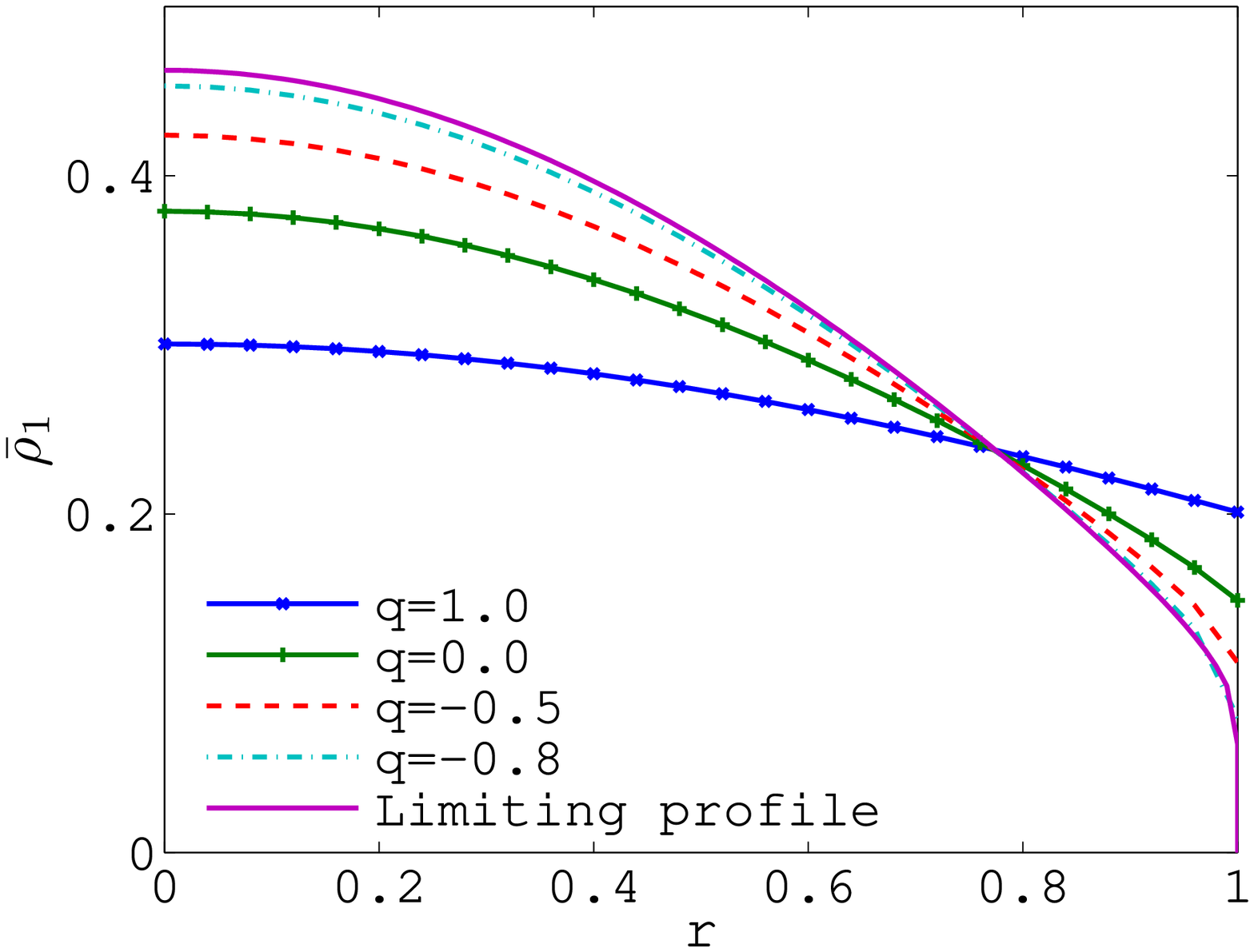}
        \includegraphics[totalheight=0.26\textheight]{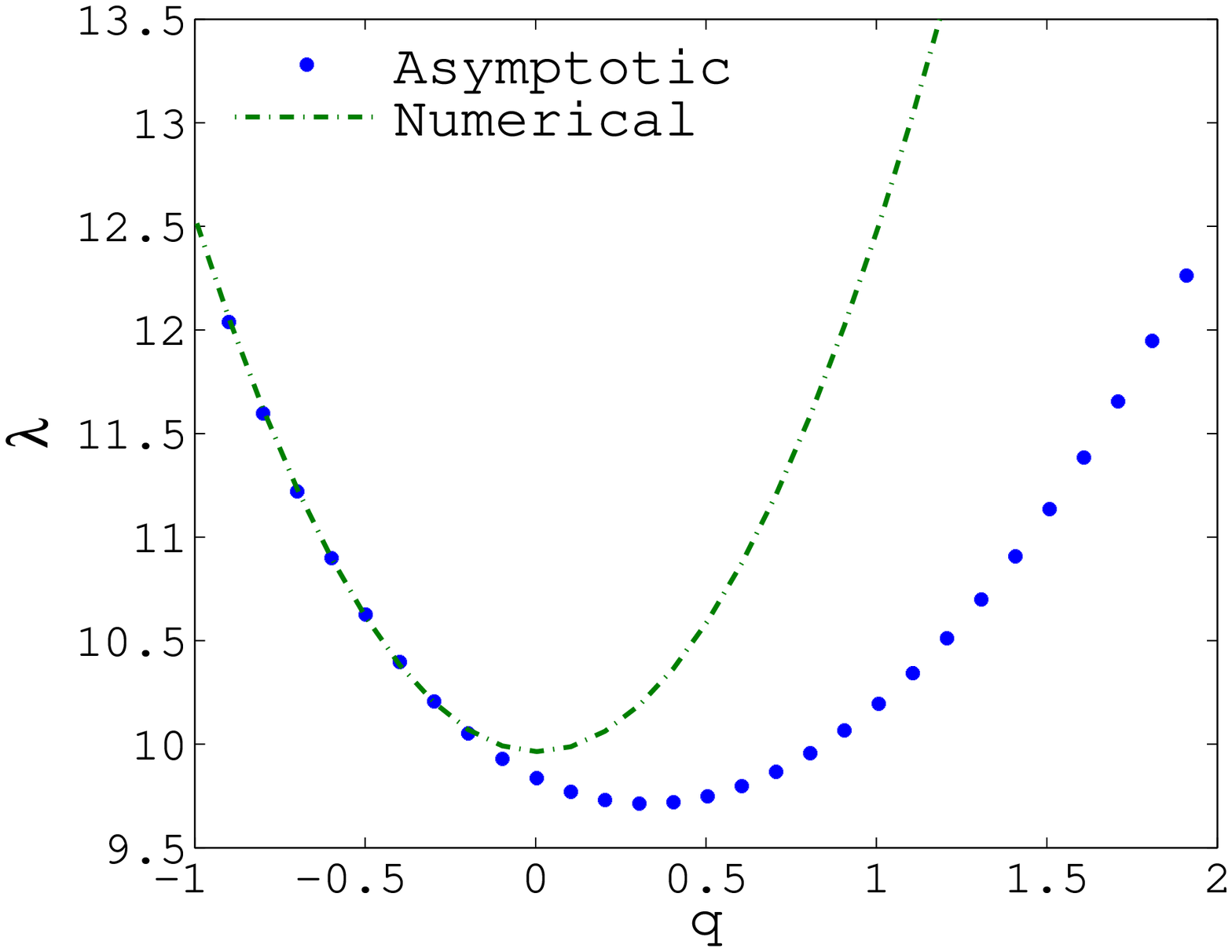}
    \end{center}
\[
\hspace{0.8cm} \textbf{(a)} \hspace{7.5cm} \textbf{(b)} 
\]
    \caption{(a) Equlibria $\bar{\rho}_1^\epsilon$ in three dimensions for $\epsilon = 2,1,0.5,0.2$ ($q = 1,0,-0.5,-0.8$) computed numerically from \eqref{eqn:evp-R1} (see Section \ref{subsect:numerics}). The plain solid line is the limiting profile $\bar{\rho}^{(0)}$ found by inverse iteration from \eqref{eq:nd1}. As $\epsilon \to 0$, $\bar{\rho}_1^\epsilon$ approaches the limiting profile, confirming the asymptotic expansion. (b) Comparison between numerics and asymptotics in three dimensions: eigenvalues obtained numerically from \eqref{eqn:evp-R1} (dots) and from the asymptotic expansion \eqref{eq:lambda1eps} valid at order $O(\epsilon^2)$. The two sets of results agree very well for small $\epsilon$ ($q$ close to $-1$).}
\label{fig:higherdsmall}
\end{figure}
\smallskip

{\bf Remark.} An implication of the above study is that, in all dimensions, the radius $R=\lambda^{-1/(n+q-2)}
=\lambda^{-1/\epsilon}$ vanishes exponentially fast as $q \searrow 2-n$, and as a result, the true steady state 
$\bar{\rho}$ converges to a Dirac delta function. 


\section{Discussion}
\label{sect:discussion}

We have studied the aggregation model \eqref{eq:aggre} with potentials $K$ that contain short-range Newtonian repulsion and long-range power-law attraction. The main merit of the family of potentials considered here is that it leads to solutions which have biologically relevant features, such as finite densities, sharp boundaries and long lifetimes \cite{M&K}. Finding such solutions to model \eqref{eq:aggre} has been indicated as a ``challenge" in previous works \cite{Topaz:Bertozzi,LiRodrigo2009} and the literature addressing this issue has been very scarce.

Well-posedness of solutions to \eqref{eq:aggre} was studied by analogy with incompressible fluid equations \cite{MajdaBertozzi}, using the Lagrangian (particle) formulation of the model.  The main object of the present work, i.e., equilibria supported on a ball, was investigated through a variety of analytical, numerical and asymptotic methods applied to the integral equation  \eqref{eqn:int-eq}. We derived existence and uniqueness of such equilibria using the Krein-Rutman theorem and established qualitative properties such as monotonicity and radial symmetry, by the method of moving planes.  

The numerical results confirm the analytical findings and also suggest that the equilibria studied in this work are global attractors for the dynamics of \eqref{eq:aggre}. We formulate this observation as a conjecture and we plan to address it in future work. A possible approach is to use the fact that the aggregation equation \eqref{eq:aggre} represents a gradient flow with respect to the energy
\begin{equation*}
\label{eq:energy}
E[\rho] = \frac{1}{2}\int_{\mathbb{R}^n}\int_{\mathbb{R}^n} K(x-y)\rho(x)\rho(y)dydx.
\end{equation*}
However the energy is not convex, therefore its  global minimizers cannot be characterized easily. Some recent progress in this direction was done in \cite{BurgerDiFrancescoFranek} in the context of aggregation models with long-range attraction and quadratic diffusion. 

The asymptotic results revealed some very interesting features of solutions to \eqref{eqn:int-eq}. In particular, as the exponent $q$ of the power-law attraction approaches $\infty$, the radii $R$ of the support approach a constant value $0.5$, and the density concentrates on a $\delta$-sphere. Distributions on spheres (uniform, as well as surprisingly complex patterns) have been studied recently \cite{Brecht_etal2011,Balague_etal} using potentials with power-law repulsion and attraction. Concentrations on $\delta$-spheres typically represent equilibrium solutions of the aggregation model. It is not surprising in fact that the non-convex energy $E$ has multiple stationary points, adding to the difficulties in studying its equilibria, as indicated before. Aggregations on spherical shells could be stable or unstable, depending on the exponents of the repulsive and attractive power-laws~\cite{KoSuUmBe2011, Balague_etal}. Choosing the repulsion component in Newtonian form, as in this paper, seems to rule out concentrations on spheres from the possible asymptotic behaviours of solutions to \eqref{eq:aggre}, except  in the limit $q\to \infty$.

The limit $q\to 2-n$ of solutions to \eqref{eqn:int-eq} is interesting for its own sake. Weakly singular integral operators are subjects of many articles and textbooks (see \cite{Vainikko1993} and references therein), but a careful asymptotic study of the eigenvalue problem \eqref{eqn:int-eq}, as the singularity approaches the critical value $2-n$, is missing from the literature. We studied the scaled problem \eqref{eqn:evp-R1} for $R=1$  and showed that eigenvalues $\lambda$ approach a constant, while the corresponding eigenfunctions approach a limiting profile $\bar{\rho}^{(0)}$, as $q \to 2-n$ (see Figures \ref{fig:ldsmall} and \ref{fig:higherdsmall}). Consequently, from the scaling \eqref{eqn:rho-scale}-\eqref{eqn:R-gen}, solutions to \eqref{eqn:int-eq} approach a Dirac $\delta$ in the limit. The findings are consistent with works that consider blow-up in aggregation models with purely attractive potentials \cite{FeRa10,BertozziCarilloLaurent}, in particular recent works that consider Newtonian potentials \cite{BertozziLaurentLeger}.

Finally, we want to comment briefly on the biologically unrealistic feature of the potential \eqref{eq:kernel}, that is, the growth of attraction with distance, when $q>0$. As discussed in more detail in \cite{FeHuKo11}, the dynamics of \eqref{eq:aggre} remains unchanged if the potential is modified in an arbitrary way outside a ball of $\mathbb{R}^n$, with a sufficiently large radius that depends on the initial conditions only. This can be inferred from the property of the density to have uniform (in time) support; this property was shown to hold for $q>1$ and it is believed to hold for all $q>2-n$ in fact.  Provided the radius of the support of the density $\rho(x,t)$ is bounded by $R_{\text{max}}$, where $R_{\text{max}}$ depends only on the initial conditions, but not on time, the potential $K(r)$ can be taken to be zero (or exponentially decaying) for $r>2 R_{\text{max}}$, without changing the dynamics.  We refer the reader to \cite{FeHuKo11} for a numerical illustration of this issue.


\section{Appendix}

\paragraph{Proof of Theorem 2.1}
We show that the operator $\mathcal{F}: \mathcal{O}_{L} \to \mathcal{B}$ is bounded, i.e., $\| \mathcal{F}(X) \|_{1,\gamma} <\infty$, for all $X \in \mathcal{O}_{L}$. We outline the main steps and refer to Chapter 4 \cite{MajdaBertozzi} for details and various technical calculus inequalities.

Write $F(X)$ as 
\[
F(X) = v \circ X.
\]
Using the calculus inequality  (Lemma 4.1 \cite{MajdaBertozzi}) 
\begin{equation}
\label{eqn:calc1}
|XY|_{\gamma} \leq \| X \|_{L^\infty} |Y|_\gamma + |X|_\gamma \| Y \|_{L^\infty},
\end{equation}
we estimate:
\[
\| F(X) \|_{1,\gamma} \leq \| v \|_{L^\infty} + \left( \| \nabla v \|_{L^\infty} + |\nabla v |_{\gamma} \right) \| X \|_{1,\gamma}.
\]
As  $\| X \|_{1,\gamma} \leq L$ for $X \in \mathcal{O}_L$, it remains to bound  $\| v \|_{L^\infty}$, $\| \nabla v \|_{L^\infty}$, and $|\nabla v |_{\gamma} $. 

We first inspect the repulsion component of $v$ (see \eqref{eqn:v2}, \eqref{eqn:f-exp} and \eqref{eqn:kr}) and estimate  $\| k \ast \rho\|_{\infty}$, $\| P[\rho] \|_{L^\infty}$ and $| P[\rho] |_\gamma$, where $P= \nabla k$ and $P[\rho]$ is the principal-value SIO:
\[
P[\rho](x) = \text{PV} \int P(x-y) \rho(y) dy.
\]
The first of these terms can be bounded as follows:
\begin{align}
\label{eqn:estimate1}
|k \ast \rho (x) | & \leq \frac{1}{n \omega_n} \int_{|x-y|<1} \frac{1}{|x-y|^{n-1}} \rho(y) dy + \frac{1}{n \omega_n}\int_{|x-y|>1} \frac{1}{|x-y|^{n-1}} \rho(y) dy \nonumber \\ 
& \leq  \| \rho \|_{L^\infty} + \frac{M}{n \omega_n}.
\end{align}
To bound $P[\rho]$ we follow the proof of Lemma 4.6 \cite{MajdaBertozzi}. We split the integral:
\[
|P[\rho](x)| = \underbrace{\text{PV} \int_{|x-y|<\epsilon} P(x-y) \rho(y) dy}_{I_1(x)} + \underbrace{\int_{|x-y|>\epsilon} P(x-y) \rho(y) dy}_{I_2(x)}.
\]
The kernel $P$ has mean-value zero on the unit sphere,
\[
\int_{|x|=1} P ds =0,
\]
which enables us to rewrite $I_1$ as 
\[
I_1(x) = \text{PV} \int_{|y|<\epsilon} P(y) (\rho(x-y)-\rho(x)) dy.
\]
Hence
\begin{align*}
|I_1(x)| &\leq \int_{|y|<\epsilon} P(y) \frac{|\rho(x-y)-\rho(x)|}{|y|^\gamma} |y|^\gamma dy \\
& \leq c |\rho|_\gamma \int_{|y|<\epsilon} |y|^{-n+\gamma} dy,
\end{align*}
and 
\[
|I_1(x)| \leq c  |\rho|_\gamma \epsilon^\gamma.
\]
Here and below, $c$ denotes a generic constant. For the second integral, we estimate
\begin{align*}
|I_2(x)| &\leq \int_{\epsilon<|x-y|<1}  | P(x-y) | \rho(y) dy + \int_{|x-y|>1}  | P(x-y) | \rho(y) dy \\
& \leq c \| \rho \|_{L^\infty} \log\left( \frac{1}{\epsilon}\right) + M.
\end{align*}
Conclude
\begin{equation}
\label{eqn:estimate2}
\|P[\rho]\|_{L^\infty}  \leq c \left (  |\rho|_\gamma \epsilon^\gamma + \| \rho \|_{L^\infty} \log (1/\epsilon) \right) +M.
\end{equation}
Finally, using the same argument as in Lemma 4.6 \cite{MajdaBertozzi}, one can show
\begin{equation}
\label{eqn:estimate3}
|P[\rho]|_{\gamma} \leq c |\rho|_{\gamma}.
\end{equation}
Estimate the seminorm  $|\rho|_{\gamma}$ as in the proof of Proposition 4.2 \cite{MajdaBertozzi}, by using 
\eqref{eqn:rhoJ} and calculus inequality \eqref{eqn:calc1}:
\begin{equation}
\label{est:rho-gamma}
| \rho |_{\gamma} \leq \| \rho_0 \circ X^{-1}\|_{L^\infty} |\text{det } \nabla_x X^{-1} |_{\gamma} + | \rho_0 \circ X^{-1}|_{\gamma} \| \text{det } \nabla_x X^{-1} \|_{L^\infty}.
\end{equation}
Using the calculus inequalities 
\begin{equation}
\label{eqn:lemma43}
|f \circ Y |_\gamma \leq |f|_\gamma \, \| \nabla Y \|_{L^\infty}^\gamma, 
 \end{equation}
\begin{equation}
\label{eqn:lemma42}
\| Y^{-1}\|_{1,\gamma} \leq c \|Y\|_{1,\gamma}^{2n-1}, 
\end{equation}
from Lemmas 4.3 and 4.2 \cite{MajdaBertozzi}, respectively, estimate
\begin{align*}
| \rho_0 \circ X^{-1}|_{\gamma} & \leq |\rho_0|_{\gamma} \,  \| \nabla_x X^{-1} \|_{L^\infty}^{\gamma} \\
& \leq c  |\rho_0|_{\gamma} \|X\|_{1,\gamma}^{\gamma(2n-1)},
\end{align*}
and
\[
|\text{det } \nabla_x X^{-1} |_{\gamma} \leq c  \|X\|_{1,\gamma}^{2n-1}.
\]
As $X \in \mathcal{O}_L$, $\| \text{det } \nabla_x X^{-1} \|_{L^\infty}<L$ and $\|X\|_{1,\gamma} <L$. Return to \eqref{est:rho-gamma} to find:
\[
| \rho |_{\gamma} \leq C(L) \left(  \| \rho_0 \|_{L^\infty} + |\rho_0|_{\gamma}  \right).
\]
Using the above bound on $| \rho |_{\gamma}$ and the uniform bound on $\| \rho\|_{L^\infty}$ derived in Section \ref{subsect:bounds}, we infer from \eqref{eqn:estimate1}-\eqref{eqn:estimate3} that the repulsion component of $\mathcal{F}$ yields a bounded operator. The singularity of the attraction component is milder than that of the repulsion and does not break the existing estimates for the singular repulsion kernel $k$. We conclude that $\mathcal{F}$ is bounded.

To prove that $\mathcal{F}$ is Lipschitz continuous, we show that $\mathcal{F}'(X)$ is bounded as a linear operator from $\mathcal{O}_L$ to $\mathcal{B}$, i.e., $\| \mathcal{F}'(X) \| < \infty $, for all $X \in \mathcal{O}_L$. Calculate $\mathcal{F}'(X)$ using \eqref{eqn:F-qgen} and \eqref{eqn:f-exp}:
\begin{align}
\label{est:Fprime}
\mathcal{F}'(X)Y &=  \left. \frac{d}{d \epsilon}  \mathcal{F}(X + \epsilon Y)  \right \vert_{\epsilon =0} \nonumber \\
& = \int \nabla f (X(\alpha) - X(\beta)) (Y(\alpha) - Y(\beta)) \rho_0(\beta) d \beta,
\end{align}
where we suppressed the time dependence for convenience.

To estimate the component of $\mathcal{F}'(X)Y$ due to repulsion one could follow the proof of Lemma 4.10 in \cite{MajdaBertozzi}. The attraction component is milder and does not break the estimates. It can be shown that 
\[
\| \mathcal{F}'(X)Y \|_{1,\gamma} \leq C(L) \left(  \| \rho_0 \|_{L^\infty} + |\rho_0|_{\gamma}  \right) \| Y \|_{1,\gamma},
\]
which proves the boundedness in the operator norm of $\mathcal{F}'(X)$. 

A key observation is that the term $Y(\alpha)-Y(\beta)$ in \eqref{est:Fprime} compensates for the singularity of $\nabla f$. We present here the estimate of $\| \mathcal{F}'(X)Y \|_{L^\infty}$, which requires in fact some changes to the proof of Lemma 4.10 \cite{MajdaBertozzi}. We do not present the estimates of $\| \nabla_{\alpha }\mathcal{F}'(X)Y \|_{L^\infty}$ and $| \nabla_{\alpha }\mathcal{F}'(X)Y |_{\gamma}$, we refer instead to the calculations in  \cite{MajdaBertozzi}.

Use a change of variable, $x=X(\alpha)$, $y=X(\beta)$, and split the  repulsion component of $ \mathcal{F}'(X)Y$ from \eqref{est:Fprime} 
\[
\int \nabla k (x-y) (Y(X^{-1}(x))-Y(X^{-1}(y))) \rho(y) dy =\underbrace{\int_{|x-y|<1}}_{J_1} + \underbrace{\int_{|x-y|>1}}_{J_2}.
\]
The procedure is similar to what we did to derive \eqref{eqn:estimate1}. From mean-value theorem, we have
\[
|Y(X^{-1}(x))-Y(X^{-1}(y))| \leq \| (\nabla_\alpha Y \circ X^{-1}) \nabla_x X^{-1}\|_{L^\infty} |x-y|. 
\]
As $|\nabla k(x)|  \leq c |x|^{-n}$,
\begin{align*}
|J_1| &\leq  c \| (\nabla_\alpha Y \circ X^{-1}) \nabla_x X^{-1}\|_{L^\infty}  \| \rho \|_{L^\infty} \int_{|x-y|<1} \frac{1}{|x-y|^{n-1}} dy \\
& \leq C(L) \| \nabla_\alpha Y \|_{L^\infty},
\end{align*}
where we also used \eqref{eqn:lemma42}, $X \in \mathcal{O}_L$, and the uniform bound on $\| \rho \|_{L^\infty}$ in the second inequality. 

The outer integral satisfies
\begin{align*}
|J_2| &\leq c \| (\nabla_\alpha Y \circ X^{-1}) \nabla_x X^{-1}\|_{L^\infty} \int_{|x-y|>1} \rho(y) dy \\
& \leq C(L) M  \| \nabla_\alpha Y \|_{L^\infty},
\end{align*}
where we used conservation of mass.

Combine the two estimates for $J_1$ and $J_2$ and argue that the attraction component would not break these estimates, to find
\[
\| \mathcal{F}'(X)Y \|_{L^\infty} \leq C(L) \| Y \|_{1,\gamma}.
\]

\bibliographystyle{plain}
\bibliography{lit}
\end{document}